\newtheorem{theorem}{Theorem}[section]
\newtheorem{thm}{Theorem}
\newtheorem{lemma}[theorem]{Lemma}
\theoremstyle{definition}
\newtheorem{rmk}{Remark}
\newcommand{\be}{\begin{equation}}
\newcommand{\ee}{\end{equation}}
\newcommand{\bsubeq}{\begin{subequations}}
	\newcommand{\esubeq}{\end{subequations}}
\renewcommand{\div}{\text{div}}
\newcommand{\ds}{\displaystyle}
\newcommand{\calL}{{\mathcal{L}}}
\newcommand{\calH}{{\mathcal{H}}}
\newcommand{\calD}{{\mathcal{D}}}
\newcommand{\calM}{{\mathcal{M}}}
\newcommand{\calS}{{\mathcal{S}}}
\newcommand{\calO}{{\mathcal{O}}}
\newcommand{\BA}{\mathbb{A}}
\newcommand{\BR}{\mathbb{R}}
\newcommand{\BW}{\mathbb{W}}
\newcommand{\wti}{\widetilde}
\newcommand{\bpm}{\begin{pmatrix}}
	\newcommand{\epm}{\end{pmatrix}}
\newcommand{\bbm}{\begin{bmatrix}}
	\newcommand{\ebm}{\end{bmatrix}}
\newcommand{\bem}{\begin{matrix}}
	\newcommand{\eem}{\end{matrix}}
\numberwithin{equation}{section}
\numberwithin{thm}{section}
\numberwithin{rmk}{section}
\numberwithin{prop}{section}
\newcommand{\bs}[1]{\mathbf{#1}}
\newcommand{\bos}[1]{\boldsymbol{#1}}
\newcommand{\Lso}{\bs{L}^{q}_{\sigma}(\Omega)}
\newcommand{\Lo}[1]{\bs{L}^{#1}_{\sigma}(\Omega)}
\newcommand{\Ls}{\bs{L}^{q}_{\sigma}}
\newcommand{\Lqpso}{\bs{L}^{q'}_{\sigma}(\omega)}
\newcommand\rfrac[2]{{}^{#1}\!/_{#2}}
\newcommand{\norm}[1]{\left\lVert#1\right\rVert}
\newcommand{\abs}[1]{\left\lvert#1\right\rvert}
\newcommand{\nin}{\noindent}
\newcommand{\curl}{curl}
\newcommand{\LpO}{\bs{L}^p(\Omega)}
\newcommand{\Yqs}{\bs{Y}^q_{\sigma}(\Omega)}
\newcommand{\Yps}{\bs{Y}^{q'}_{\sigma}(\Omega)}
\newcommand{\Yqss}{(\bs{Y}^q_{\sigma}(\Omega))^*}
\renewcommand{\curl}{\operatorname{curl}}
\renewcommand{\div}{\operatorname{div}}
\newcommand{\thistheoremname}{}
\newtheorem*{genericthm*}{\thistheoremname}
\newenvironment{namedthm*}[1]
{\renewcommand{\thistheoremname}{#1}%
	\begin{genericthm*}}
	{\end{genericthm*}}
\providecommand{\keywords}[1]
{
	\small	
	\textbf{\textit{Keywords---}} #1
}
\begin{document}

\title{Unique Continuation of Static Over-Determined Magnetohydrodynamic Equations}

\author{Irena Lasiecka \thanks{Department of Mathematical Sciences, University of Memphis, Memphis, TN 38152 USA and IBS, Polish Academy of Sciences, Warsaw, Poland. (lasiecka@memphis.edu)}
	\and Buddhika Priyasad \thanks{Department of Mathematics and Statistics, University of Konstanz, Konstanz, Germany. (priyasad@uni-konstanz.de).}
	\and Roberto Triggiani \thanks{Department of Mathematical Sciences, University of Memphis, Memphis, TN 38152 USA. (rtrggani@memphis.edu)}}

\date{}

\maketitle

\begin{abstract}
	This paper establishes the Unique Continuation Property (UCP) for a suitably overdetermined Magnetohydrodynamics (MHD) eigenvalue problem, which is equivalent to the Kalman, finite rank, controllability condition for the finite dimensional unstable projection of the linearized dynamic MHD problem. It is the ``ignition key" to obtain uniform stabilization of the dynamic nonlinear MHD system near an unstable equilibrium solution, by means of finitely many, interior, localized feedback controllers \cite{LPT.6}. The proof of the UCP result uses a pointwise Carleman-type estimate for the Laplacian following the approach that was introduced in \cite{RT2:2009} for the Navier-Stokes equations and further extended \cite{TW:2021} for the Boussinesq system.
\end{abstract}

\nin \keywords{Magnetohydrodynamics equations, Unique continuation, Uniform stabiliztion, Carleman estimates.}

\section{Introduction, the role of the unique continuation in uniform stabilization, main results, literature.}

\subsection{Unique continuation properties (UCP) of over-determined static problems: the ignition key for uniform feedback stabilization}

\noindent \textbf{The dynamic MHD equations.} After the initial work carried out by the Nobel laureate Hannes Alfv\'{e}n in 1970, Magnetohydrodynamics (henceforth referred to as MHD) has culminated as an emerging discipline in Plasma physics. MHD refers to phenomena arising in electrically conducting magnetic fluids. It is caused by the induction of current in a conductive fluid flow due to a magnetic field and moreover by polarization of the fluid and reciprocal changes in the magnetic field. MHD has been used extensively in plasma confinement, liquid metal cooling of nuclear reactors and electromagnetic casting (EMC). The system of MHD equations - below in \eqref{1.1} - consists of the Navier-Stokes equations of a viscous incompressible fluid flow suitably coupled by high-order coupling with Maxwell-Ohm equations (of parabolic character) of an electromagnetic field \cite{Sol:1960, ST:1983, YG:1998, Yam:2004}.\\

\noindent \textbf{Uniform stabilization of fluids.} Recent work by the authors has focused on the problem of feedback stabilization (asymptotic turbulent suppression) of fluids such as Navier-Stokes equations as well as Boussinesq systems, and MHD equations, on bounded domains $\Omega$ in $\BR^d, \, d = 2,3$ by means of finite dimensional, static, feedback controls, either localized in the interior \cite{LPT.1, LPT.3, LPT.6} or else localized at the boundary \cite{LPT.2, LPT.4}. More specifically, to illustrate, a 20-year old open problem (introduced by A. Fursikov around 2000 \cite{F.1, F.2}) as to whether the 3d-Navier-Stokes equation could be stabilized by static, boundary based localized feedback controllers which moreover are finite dimensional (in fact, of minimal dimension), was positively proved in \cite{LPT.2}. It required abandoning the usual Sobolev-Hilbert setting of the literature in favor of a new Besov space setting with tight indices (``close" to $L^3(\Omega)$ for $d = 3$).\\

\noindent \textbf{Critical role of UCP.} Following the strategy for feedback stabilization of parabolic dynamics introduced in \cite{RT:1975}, an extensively used since in the literature, a first step of the analysis consists in feedback stabilizing with an arbitrarily large decay rate, the finite dimensional unstable component of the full dynamics with static, state feedback controls. The ability to do so requires the property of controllability of the finite dimensional unstable component, which in fact is equivalent to the needed ``spectrum allocation property" \cite{Za:1992}. Showing such controllability property (Kalman's algebraic rank condition) for PDE problems requires a fundamental Unique Continuation Property (UCP). Numerous illustrations from classical parabolic problems to fluid (Navier-Stokes, Boussinesq system) are given in the extensive article \cite{RT:2024}, which employs UCP for fluids \cite{RT:2008, RT1:2009, RT2:2009, TW:2021}. Thus the mathematical focus of the present paper is to establish the required Unique Continuation Property of an eigenvalue problem for the MHD equations subject to an over-determined condition. Such UCP is the primary subject of the present paper. The proof (based on pointwise Carleman-type estimate for the Laplacian) is a natural extension of those given in \cite{RT2:2009} for the Navier-Stokes equations and in \cite{TW:2021} for the Boussinesq system. As mentioned, we refer to the paper \cite{RT:2024} where the role of UCP in the case of parabolic dynamics is extensively treated. The authors' subsequent study of asymptotic turbulent suppression of the MHD system first with localized interior control in Besov spaces is given in \cite{LPT.6}, to be next followed by the localized boundary-type control case in Besov spaces.


\subsection{Controlled dynamic Magnetohydrodynamic equations}

As already noted, we wish to introduce the present unique continuation theorem in the context of uniform stabilization problem. Let, at first, $\Omega$ be an open connected bounded domain in $\mathbb{R}^d, d = 2,3$ with sufficiently smooth boundary $\Gamma = \partial \Omega$.  More specific requirements will be given below. Let $\omega$ be an arbitrarily small open smooth subset of the interior $\Omega$, $\omega \subset \Omega$, of positive measure. Let $m$ denote the characteristic function of $\omega$: $m(\omega) \equiv 1, \ m(\Omega \backslash \omega) \equiv 0$. We consider the following Magnetohydrodynamic equations perturbed by forces $f$ and $g$, and subject to the action of a pair ${u,v}$ of interior localized controls, to be described below, where $Q =  (0, \infty) \times \Omega, \ \Sigma = (0,\infty) \times \Gamma$: 
\begin{subequations}\label{1.1}
	\begin{align}
	y_t - \nu \Delta y + (y \cdot \nabla) y + \nabla \pi + \frac{1}{2} \nabla ( B \cdot B) - (B \cdot \nabla) B &= m(x)u(t,x) + f(x)   &\text{ in } Q, \label{1.1a}\\ 
	B_t + \eta \ \curl  \curl \, B + (y \cdot \nabla) B - (B \cdot \nabla)y &= m(x)v(t,x) + g(x) &\text{ in } Q, \label{1.1b}\\
	\div y = 0, \quad \text{div} \ B &= 0   &\text{ in } Q, \label{1.1c}\\
	y = 0, \ B \cdot n = 0, \ (\curl  B) \times n & = 0 &\text{ on } \Sigma, \label{2.1d}\\
	y(0,x) = y_0, \quad B(0,x) & = B_0 &\text{ on } \Omega. \label{1.1e}
	\end{align}
\end{subequations}
We note the formula
\begin{equation*}
	\curl  \curl  B = -\Delta B + \nabla \div B
\end{equation*}
so that Eqt  \eqref{1.1b} can be more conveniently rewritten as 
\begin{equation}\label{1.1b'}
	B_t - \eta \ \Delta B + (y \cdot \nabla) B - (B \cdot \nabla)y = m(x)v(t,x) + g(x) \tag{1.1b'}
\end{equation}
invoking $\div B \equiv 0$ in $Q$ from \eqref{1.1c}. Furthermore, we denote total pressure $\varrho$ in the dynamic equation as $\ds \varrho := \pi + \frac{1}{2} (B \cdot B)$ and in the static case $\ds \varrho_e := \pi_e + \frac{1}{2} (B_e \cdot B_e)$.

\subsection{Stationary Magnetohydrodynamics equations}
The following result represents our basic starting point. See \cite{AB:2020}.

\begin{thm}\label{Thm-1.1}
	Consider the following steady-state Magnetohydrodynamics equations in $\Omega$	
	\begin{subequations}\label{1.2}
		\begin{align}
		- \nu \Delta y_e + (y_e \cdot \nabla) y_e + \nabla \varrho_e - (B_e \cdot \nabla) B_e &= f(x)   &\text{in } \Omega, \label{1.2a}\\ 
		-\eta \Delta B_e + (y_e \cdot \nabla) B_e - (B_e \cdot \nabla)y_e &= g(x) &\text{in } \Omega, \label{1.2b}\\
		\div y_e = 0, \quad \div B_e &= 0   & \text{in } \Omega, \label{1.2c}\\
		y_e = 0, \ B_e \cdot n = 0, \ (\curl B_e) \times n & = 0 &\text{on } \Gamma. \label{1.2d}
		\end{align}
	\end{subequations}
	Let $1 < q < \infty$. For any $f,g \in L^q(\Omega)$, there exits a solution (not necessarily unique) $(y_e,B_e, \pi_e) \in (W^{2,q}(\Omega))^d \times (W^{2,q}(\Omega))^d \times W^{1,q}(\Omega), \ q > d$.
\end{thm}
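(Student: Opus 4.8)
My plan is to establish existence by a topological (Leray--Schauder) fixed-point argument after decoupling the pressure via the Helmholtz--Leray projection. First I would apply the Leray projection $\mathbb{P}$ onto solenoidal fields to \eqref{1.2a}, thereby eliminating $\nabla\varrho_e$ and recasting the momentum balance as $\nu A\, y_e = \mathbb{P}\big[f - (y_e\cdot\nabla)y_e + (B_e\cdot\nabla)B_e\big]$, where $A$ is the Stokes operator on $\bs{L}^q_\sigma(\Omega)$ associated with the no-slip condition $y_e=0$ on $\Gamma$. For the induction equation \eqref{1.2b}, rewritten as in \eqref{1.1b'} via $\curl\curl B_e=-\Delta B_e$, I would introduce the analogous second-order elliptic operator $A_B$ generated by $-\eta\Delta$ under the constraint $\div B_e=0$ together with the natural boundary conditions $B_e\cdot n=0$, $(\curl B_e)\times n=0$ of \eqref{1.2d}. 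The decisive linear input is the $L^q$-regularity theory for these two operators: each is boundedly invertible from $\bs{L}^q_\sigma(\Omega)$ onto its domain, which embeds continuously into $(W^{2,q}(\Omega))^d$. This reduces the problem to a fixed-point equation for the pair $(y_e,B_e)$ in $\bs{L}^q_\sigma(\Omega)\times \bs{L}^q_\sigma(\Omega)$.

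Next I would define the nonlinear solution map $\mathcal{T}$ sending a frozen pair $(\tilde y,\tilde B)$ to the solution $(y_e,B_e)$ of the two linear elliptic problems with right-hand sides $\mathbb{P}[f-(\tilde y\cdot\nabla)\tilde y+(\tilde B\cdot\nabla)\tilde B]$ and $\mathbb{P}[g-(\tilde y\cdot\nabla)\tilde B+(\tilde B\cdot\nabla)\tilde y]$. The hypothesis $q>d$ is precisely what makes the nonlinearity admissible: the Sobolev embedding $W^{1,q}(\Omega)\hookrightarrow C(\overline{\Omega})$ and the bilinear estimate $\norm{(u\cdot\nabla)w}_{L^q}\lesssim \norm{u}_{W^{1,q}}\norm{w}_{W^{1,q}}$ guarantee that each quadratic term lies in $L^q(\Omega)$, so $\mathcal{T}$ indeed maps into $(W^{2,q})^d\times(W^{2,q})^d$. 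Composing with the compact Rellich embedding $W^{2,q}\hookrightarrow W^{1,q}$ renders $\mathcal{T}$ a compact operator on the base space, which is the structural requirement for the degree-theoretic argument.

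The heart of the proof---and the step I expect to be the main obstacle---is the derivation of an a priori bound that is uniform over the homotopy parameter $\lambda\in[0,1]$ in the rescaled equation $w=\lambda\,\mathcal{T}(w)$, since the data are arbitrary and no smallness is assumed. The essential observation is the energy cancellation built into the MHD coupling: testing the momentum equation with $y_e$ and the induction equation with $B_e$, the two cross terms combine as
\begin{equation*}
\int_\Omega (B_e\cdot\nabla)B_e\cdot y_e\,dx + \int_\Omega (B_e\cdot\nabla)y_e\cdot B_e\,dx = \int_\Omega (B_e\cdot\nabla)(B_e\cdot y_e)\,dx = 0,
\end{equation*}
using $\div B_e=0$ together with $B_e\cdot n=0$ on $\Gamma$, while $\int_\Omega (y_e\cdot\nabla)y_e\cdot y_e\,dx=\int_\Omega (y_e\cdot\nabla)B_e\cdot B_e\,dx=0$ by the solenoidality of $y_e$ and the no-slip condition. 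This eliminates all cubic contributions and yields $\nu\norm{\nabla y_e}_{L^2}^2+\eta\norm{\curl B_e}_{L^2}^2 \le \norm{f}\,\norm{y_e}+\norm{g}\,\norm{B_e}$, whence, via Poincar\'e and the boundary conditions, an $H^1$-bound on $(y_e,B_e)$; the factor $\lambda\le 1$ multiplying the forcing keeps the bound uniform.

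Finally I would bootstrap from this base-level control to the claimed $W^{2,q}$-regularity: the $H^1$ estimate places the quadratic terms in some $L^r$, the elliptic/Stokes $L^r$-theory lifts $(y_e,B_e)$ to $W^{2,r}$, and iterating this gain-of-integrability a finite number of times reaches $W^{2,q}$ for the given $q>d$, with all constants tracked to preserve $\lambda$-independence. With a $\lambda$-uniform a priori bound in hand, the Leray--Schauder fixed-point theorem produces a solution $(y_e,B_e)\in (W^{2,q})^d\times(W^{2,q})^d$ of the projected system; the pressure $\pi_e\in W^{1,q}(\Omega)$ is then recovered from \eqref{1.2a} by the de Rham theorem, the residual being a gradient. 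Non-uniqueness is expected and accepted, consistent with the topological rather than contractive nature of the argument.
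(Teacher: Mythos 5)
The first thing to say is that the paper contains no proof of Theorem \ref{Thm-1.1} to compare yours against: the result is imported wholesale from the reference \cite{AB:2020} (``The following result represents our basic starting point. See \cite{AB:2020}.''), and the paper's own contribution begins only afterwards, with the UCP. Your overall strategy --- Helmholtz--Leray projection, $L^q$ theory for the Stokes operator and for the magnetic operator associated with the boundary conditions \eqref{1.2d}, a Leray--Schauder homotopy closed by the MHD energy cancellation, elliptic bootstrap up to $W^{2,q}$, and de Rham recovery of the pressure --- is the standard route for stationary MHD systems and is in the spirit of the cited reference, so as a blueprint it is reasonable.

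However, two steps are genuine gaps. First, projecting the induction equation is not harmless. Unlike \eqref{1.2a}, equation \eqref{1.2b} contains no gradient term, so a fixed point of your map $\mathcal{T}$ solves only $\mathbb{P}\big[-\eta\Delta B_e+(y_e\cdot\nabla)B_e-(B_e\cdot\nabla)y_e\big]=\mathbb{P}g$, i.e.\ \eqref{1.2b} up to an uncontrolled gradient; silently inserting $\mathbb{P}$ changes the problem. Moreover this cannot be repaired for arbitrary data: when $\div y_e=\div B_e=0$ one has $-\Delta B_e=\curl\curl B_e$ and $(y_e\cdot\nabla)B_e-(B_e\cdot\nabla)y_e=-\curl(y_e\times B_e)$, so the entire left-hand side of \eqref{1.2b} is identically divergence free, and a solution can exist only if $\div g=0$ in the sense of distributions. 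Hence either $g$ must be assumed solenoidal (the compatibility implicitly built into the formulation of the cited reference) or the statement ``for any $f,g\in L^q$'' cannot hold literally; your proof needs to confront this point explicitly rather than absorb it into the projection. Second, your passage from the energy identity $\nu\norm{\nabla y_e}_{L^2}^2+\eta\norm{\curl B_e}_{L^2}^2\le \norm{f}\,\norm{y_e}+\norm{g}\,\norm{B_e}$ to an $H^1$ bound on $B_e$ requires the inequality $\norm{B_e}\lesssim\norm{\curl B_e}$ for fields with $\div B_e=0$, $B_e\cdot n=0$, and this holds only when $\Omega$ has trivial first Betti number. The paper assumes only a smooth bounded connected $\Omega$; on a solid torus, for instance, there exist nonzero harmonic fields $h$ with $\curl h=0$, $\div h=0$, $h\cdot n=0$, and each such $h$ (with $y_e=0$, constant pressure) solves the homogeneous system, so your a priori estimate cannot control the harmonic component and the boundedness of the Leray--Schauder solution set $\{w=\lambda\,\mathcal{T}(w)\}$ fails as stated. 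You need either a simple-connectedness hypothesis or a separate treatment of the finite-dimensional space of harmonic fields.
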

\vspace{-20pt}
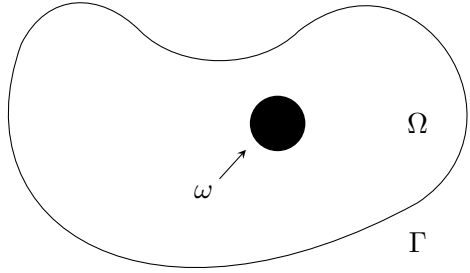
\begin{figure}[H]
	\centering
	\begin{tikzpicture}[x=10pt,y=10pt,>=stealth, scale=0.5]
	\draw
	(-15,5)
	.. controls (-20,-9) and (-5,-18) .. (15,-7)
	.. controls  (24,-1) and (15,13).. (6,6)
	.. controls (3,3) and (-3,3) .. (-6,6)
	.. controls (-9,9) and (-13,9) .. (-15,5);
	\draw (15,-1) node[scale=1] {$\Omega$};
	\draw (15,-10) node[scale=1] {$\Gamma$};
	\draw[fill, line width = 1pt, ,opacity = .25] (4.4,-1) circle (2);
	\draw (-1.2,-6.2) node[scale=1] {$\omega$};
	\draw[->]  (-0,-5.2) -- (2,-3);
	\end{tikzpicture}
	\caption{The localized interior set $\omega$}
	\label{fig:localized_pair}
\end{figure}

\subsection{Translated MHD system}
	We return to Theorem \ref{Thm-1.1} and choose an equilibrium triplet $\{y_e, B_e, \pi_e\}$ to be kept fixed throughout the analysis. Then, we translate by $\{y_e, p_e\}$ the original N-S problem (\ref{1.1}). Thus, we introduce new variables
\begin{subequations}\label{1.3}
	\begin{equation}\label{1.3a}
	z = y - y_e, \quad \mathbb{B} = B - B_e \quad p = \varrho - \varrho_e
	\end{equation}
	and obtain the translated problem given by
	\begin{eqnarray}
	z_t - \nu \Delta z + (y_e \cdot \nabla)z + (z \cdot \nabla)y_e + 
	(z \cdot \nabla) z - (B_e \cdot \nabla)\mathbb{B} - (\mathbb{B} \cdot \nabla)B_e  \hspace{4cm} \nonumber\\ - (\mathbb{B} \cdot \nabla) \mathbb{B} + \nabla p = mu  \text{ in } Q, \hspace{1cm}\\
	\mathbb{B}_t - \eta \Delta \mathbb{B} + (z \cdot \nabla) B_e + (y_e \cdot \nabla ) \mathbb{B} - (\mathbb{B} \cdot \nabla)y_e - (B_e \cdot \nabla) z  \hspace{4cm} \nonumber\\ 
	+ (z \cdot \nabla) \mathbb{B} - (\mathbb{B} \cdot \nabla) z = mv \text{ in } Q,  \hspace{1cm}\\
	\div \ z = 0, \quad \div \ \mathbb{B} = 0 \text{ in } Q, \hspace{1cm}\\
	z = 0, \ \mathbb{B} \cdot n = 0, \ (\curl \ \mathbb{B}) \times n = 0 \text{ on } \Sigma, \hspace{1cm}\\
	z(0,x) = y_0(x) - y_e(x), \quad \mathbb{B}(0,x) = B_0(x) - B_e(x) \text{ on } \Omega. \hspace{1cm}
	\end{eqnarray}
\end{subequations}

\subsection{Translated linearized MHD system}
\begin{subequations}\label{1.4}
	The translated linearized problem is
	\begin{eqnarray}
	w_t - \nu \Delta w + (y_e \cdot \nabla)w + (w \cdot \nabla)y_e - (\mathbb{W} \cdot \nabla)B_e - (B_e \cdot \nabla)\mathbb{W} + \nabla p = mu  \text{ in } Q,\\
	\mathbb{W}_t - \eta \Delta \mathbb{W} + (w \cdot \nabla) B_e - (B_e \cdot \nabla) w + (y_e \cdot \nabla ) \mathbb{W} - (\mathbb{W} \cdot \nabla)y_e = mv \text{ in } Q,\\ 
	\div \ w = 0, \quad \div \ \mathbb{W} = 0 \text{ in } Q, \label{1.4c}\\
	w = 0, \ \mathbb{W} \cdot n = 0, \ (\curl \ \mathbb{W}) \times n = 0 \text{ on } \Sigma, \label{1.4d}\\
	w(0,x) = y_0 - y_e, \quad \mathbb{W}(0,x) = B_0 - B_e \text{ on } \Omega.
	\end{eqnarray}
\end{subequations}

\subsection{The required unique continuation theorem to uniformly stabilize the linear problem \eqref{1.4} by localized finitely many static feedback controls in \cite{LPT.6}}

As described in the introduction, solution to the desired uniform stabilization problem of the original, non-linear problem (\hyperref[1.1]{1.1a-e}) in the vicinity of an unstable equilibrium solution $\{y_e,B_e\}$ is given in \cite{LPT.6}. A critical preliminary step is the uniform stabilization of the linear problem (\hyperref[1.4]{1.4a-e}). To achieve it, the following Unique Continuation result it critical. Its implication on the sought-after uniform stabilization is shown in \cite{LPT.6}.

\begin{thm}\label{Thm-UCP-1}(UCP, direct problem) Let $\omega$ be an arbitrary open, connected smooth subset of $\Omega$, thus of positive measure, Fig. \ref{fig:localized_pair}. Let $\{ \phi, \xi, p \} \in (W^{2,q}(\Omega))^d \times (W^{2,q}(\Omega))^d \times W^{1,q}(\Omega), \ q > d$, solve the original eigenvalue problem
	\begin{subequations}\label{1.5}
		\begin{eqnarray}
		- \nu \Delta \phi + (y_e \cdot \nabla) \phi + (\phi \cdot \nabla)y_e - (B_e \cdot \nabla)\xi - (\xi \cdot \nabla)B_e  + \nabla p = \lambda \phi  \text{ in } \Omega, \label{1.5a}\\
		 - \eta \Delta \xi + (\phi \cdot \nabla) B_e - (B_e \cdot \nabla) \phi + (y_e \cdot \nabla ) \xi - (\xi \cdot \nabla)y_e  = \lambda \xi \text{ in } \Omega, \label{1.5b}\\ 
		\div \ \phi = 0, \quad \div \ \xi = 0 \text{ in } \Omega, \label{1.5c}\\
		\phi = 0, \ \xi \cdot n = 0, \ (\curl \xi) \times n = 0 \text{ on } \Gamma. \label{1.5d}
		\end{eqnarray}
	\end{subequations}
	along with the over-determined condition
	\begin{equation}\label{1.6}
		\phi \equiv 0, \quad \xi \equiv 0 \quad \text{ in } \omega.
	\end{equation}
	Then 
	\begin{equation}
		\phi \equiv 0, \quad \xi \equiv 0, \quad p \equiv const \text{ in } \Omega. \qed \label{1.7}
	\end{equation}
\end{thm}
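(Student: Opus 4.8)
The plan is to follow the Carleman-estimate strategy of \cite{RT2:2009} (Navier--Stokes) and \cite{TW:2021} (Boussinesq): recast the coupled eigenvalue system \eqref{1.5a}--\eqref{1.5b} as a family of scalar Poisson equations whose right-hand sides are at most \emph{first order} in $(\phi,\xi)$, and then absorb every coupling term through a large Carleman parameter. Throughout I use that $q>d$ forces, by Sobolev embedding, $\phi,\xi\in C^1(\overline\Omega)$ and $y_e,B_e\in C^1(\overline\Omega)$, so all lower-order coefficients below are bounded.

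First I would dispose of the pressure. On $\omega$ the over-determination \eqref{1.6} makes every term of \eqref{1.5a} vanish except $\nabla p$; hence $\nabla p\equiv 0$ on $\omega$, and since $\omega$ is connected, $p$ is constant there. Normalizing that constant to $0$ (legitimate, as only $\nabla p$ enters \eqref{1.5a}) yields $p\equiv 0$ and $\nabla p\equiv 0$ on $\omega$, in particular the zero Cauchy data $p=\partial_n p=0$ on $\partial\omega$ from the interior. To obtain a genuine elliptic equation for the scalar $p$, apply the divergence to \eqref{1.5a}: the term $\nu\Delta(\div\phi)$ and the eigenvalue term $\lambda\,\div\phi$ drop out because $\div\phi=0$, and --- crucially --- using in addition $\div y_e=\div B_e=0$ \emph{all second derivatives of $y_e,B_e$ cancel}, leaving the purely first-order identity
\[
\Delta p \;=\; -\sum_{i,k}\big[(\partial_i y_{e,k})(\partial_k\phi_i)+(\partial_i\phi_k)(\partial_k y_{e,i})\big]\;+\;\sum_{i,k}\big[(\partial_i B_{e,k})(\partial_k\xi_i)+(\partial_i\xi_k)(\partial_k B_{e,i})\big],
\]
whose coefficients $\nabla y_e,\nabla B_e$ are bounded. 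Together with \eqref{1.5a}--\eqref{1.5b} (the magnetic equation carries no pressure and is already in Laplacian form) this gives the pointwise bound, a.e. in $\Omega$,
\[
|\Delta\phi|+|\Delta\xi|+|\Delta p|\;\le\; C\big(|\phi|+|\xi|+|\nabla\phi|+|\nabla\xi|+|\nabla p|\big),
\]
the single pressure-gradient term $|\nabla p|$ entering only through $\Delta\phi$.

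This is exactly the structure for which the \emph{pointwise} Carleman estimate for the Laplacian of \cite{RT2:2009,TW:2021} is built. I would fix a weight $\varphi$ with the requisite strong-pseudoconvexity property and a cutoff equal to $1$ off a neighborhood of $\partial\omega$, so that every commutator error is supported where $\phi,\xi,p$ already vanish, and apply that estimate to each scalar component of $\phi$, of $\xi$, and to $p$. Summing, the left-hand side is the full weighted energy
\[
\tau^3\!\int e^{2\tau\varphi}\big(|\phi|^2+|\xi|^2+|p|^2\big)+\tau\!\int e^{2\tau\varphi}\big(|\nabla\phi|^2+|\nabla\xi|^2+|\nabla p|^2\big),
\]
while the right-hand side is $C\!\int e^{2\tau\varphi}\big(|\phi|^2+|\xi|^2+|\nabla\phi|^2+|\nabla\xi|^2+|\nabla p|^2\big)$ arising from the squared Laplacians. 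For $\tau$ large the zeroth-order terms are swallowed by the $\tau^3$-factor and all first-order terms --- including the cross term $|\nabla p|^2$ fed into the momentum equation --- by the $\tau$-factor, forcing $\phi\equiv\xi\equiv 0$ (and $\nabla p\equiv 0$) on an open set reaching strictly across $\partial\omega$.

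This local step shows that the set on which $\phi\equiv\xi\equiv 0$ is relatively closed in $\Omega$; being also open and nonempty (it contains $\omega$) in the connected $\Omega$, it is all of $\Omega$, and continuity up to $\Gamma$ extends the vanishing to $\overline\Omega$. Inserting $\phi\equiv\xi\equiv 0$ back into \eqref{1.5a} gives $\nabla p\equiv 0$, hence $p\equiv\text{const}$, which is \eqref{1.7}. I expect the main obstacle to be the simultaneous closure of the coupled Carleman estimate: verifying that a single weight $\varphi$ and a single parameter $\tau$ dominate all cross-couplings at once --- in particular that the pressure gradient in the momentum equation is controlled by the $\tau$-order term supplied by the Poisson estimate for $p$ --- and that the vector magnetic field $\xi$, with its $\curl$-type boundary condition, introduces no new obstruction once handled componentwise (the genuine novelty over the scalar-temperature Boussinesq case of \cite{TW:2021}). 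Checking the pseudoconvexity of $\varphi$ along the propagation path and the cutoff commutators near $\partial\omega$ are the remaining technical points.
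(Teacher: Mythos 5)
Your proposal's skeleton --- normalize $p \equiv 0$ on $\omega$ from \eqref{1.5a} and \eqref{1.6}; take the divergence of \eqref{1.5a}, using $\div \phi = \div y_e = \div \xi = \div B_e = 0$ so that all second derivatives of the coefficients cancel and $\Delta p$ is first order in $(\phi,\xi)$; then run a componentwise pointwise Carleman estimate for the Laplacian and absorb every coupling, including the $|\nabla p|^2$ term entering through the momentum equation, by taking $\tau$ large --- is exactly the paper's (Steps 1, 10, and 7--14). Your concluding propagation (a local step plus connectedness, then continuity up to $\Gamma$) is a legitimate reorganization of the paper's ``push $\Omega_1$ toward $\partial\Omega$'' step.

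The genuine gap is the cutoff/weight geometry, which is the actual engine of the proof and which you describe in a way that cannot be realized. You ask for ``a cutoff equal to $1$ off a neighborhood of $\partial\omega$, so that every commutator error is supported where $\phi,\xi,p$ already vanish.'' But the integrated Carleman estimate carries a boundary term $\int_{\partial G} V_w\cdot\nu\,d\sigma$ which vanishes only where the \emph{Cauchy data} of the cut-off functions vanish on $\partial G$; on $\Gamma = \partial\Omega$ these data are \emph{not} zero ($p$ satisfies no boundary condition at all, $\xi$ only satisfies $\xi\cdot n = 0$ and the curl condition, and $\partial\phi/\partial\nu$ need not vanish even though $\phi=0$ there). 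So the cutoff must be $\equiv 0$ near $\Gamma$, and its transition from $1$ to $0$ then necessarily occurs in an annular region where the solution does \emph{not} a priori vanish: commutator errors supported in the non-vanishing set are unavoidable, and your proposal provides no mechanism to control them. This is precisely what the paper's construction supplies: $\chi \equiv 1$ on $\omega \cup \Omega_1$, $\chi \equiv 0$ on $\Omega_0$, transition inside $\Omega^*$, together with a weight satisfying $\psi \geq 0$ on $\Omega_1$, $\psi \leq 0$ on $\Omega^*$, and with its critical point placed inside $\omega$ (so the no-critical-point hypothesis of Theorem \ref{thm4-2-1} holds on $G = \Omega_1\cup\Omega^*$). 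Then $e^{2\tau\psi} \leq 1$ on the commutator support, so the entire right-hand side of the final estimate \eqref{6E2-35} remains a fixed constant as $\tau \to \infty$, while the left-hand side dominates $\tau^3\int_{\Omega_1}|u|^2\,dx$ because $e^{2\tau\psi}\geq 1$ and $\chi\equiv 1$ on $\Omega_1$; dividing by $\tau^3$ and letting $\tau\to\infty$ yields vanishing on $\Omega_1$. Without this interplay between the sign of the weight and the support of the commutators (or the classical equivalent: arranging the weight to be strictly smaller on the bad part of the commutator support than on the set where vanishing is concluded), the large-$\tau$ absorption in your argument removes only the terms the equation itself generates, not the cutoff errors, and the local step --- on which your whole connectedness argument rests --- is unproved. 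You also leave untreated the paper's Case 2, where $\omega$ is a full or partial collar of $\Gamma$, but that is secondary to the main gap.
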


\nin In line with the literature of Navier-Stokes equations, it will be convenient to introduce the following first order operators
\begin{subequations}\label{2.6}
	\begin{align}
	\calL_1 \phi &= \calL^+_{y_e}\phi = (y_e \cdot \nabla) \phi + (\phi \cdot \nabla)y_e, \label{2.6a}\\
	\calL_2 \xi &= \calL^+_{B_e}\xi = (B_e \cdot \nabla)\xi + (\xi \cdot \nabla)B_e. \label{2.6b}
	\end{align}
\end{subequations}
\vspace{-20pt}
\begin{subequations}\label{1.9}
	\begin{align}
	\calM_1 \xi &= \calL^-_{y_e}\xi = (y_e \cdot \nabla ) \xi - (\xi \cdot \nabla)y_e, \label{1.9a}\\
	\calM_2 \phi &= \calL^-_{B_e}\phi = (B_e \cdot \nabla) \phi - (\phi \cdot \nabla) B_e, \label{1.9b}
	\end{align}
\end{subequations}

\nin $\calL^+_{y_e}$ and $\calL^+_{B_e}$ being the Oseen operators for $y_e$ and $B_e$ respectively. For convenience in the analysis below, we re-write the $\phi$-equation \eqref{1.5a} and the $\xi$-equation \eqref{1.5b}, by use of \eqref{2.6}, \eqref{1.9} as 
\begin{subequations}
	\begin{align}
		-\nu \Delta \phi + \calL_1 \phi - \calL_2 \xi + \nabla p &= \lambda \phi \text{ in } \Omega, \tag{1.5a'} \label{1.5a'}\\
		-\eta \Delta \xi + \calM_1 \xi - \calM_2 \phi  &= \lambda \xi \text{ in } \Omega. \tag{1.5b'}\label{1.5b'}
	\end{align}
\end{subequations}

\subsection{Literature}
A comparison between the results on uniform stabilization of the MHD problem (\hyperref[1.1]{1.1a-e}) in Besov spaces \cite{LPT.6} and past results in the literature (all in Hilbert spaces) is given in \cite[Section 1.5]{LPT.6}. In particular, \cite{LPT.6} requires a new maximal $L^p$-regularity \cite{LPT.5} (see also \cite{LPT.7}) in the Besov setting, while by contrast in the Hilbert setting of \cite{Lef:2011}, only analyticity is needed (which is equivalent to maximal $L^2$-regularity \cite{DS:1964}), a less challenging task. Paper \cite{LPT.6} \uline{constructs explicitly} the finite-dimensional, stabilizing feedback controllers, and of minimal numbers, while \cite{Lef:2011} simply asserts the existence of a non-constructed feedback operator with finite dimensional range of unspecified dimension. The finite dimensional decomposition approach introduced in \cite{RT:1975}, and followed in both \cite{Lef:2011} and \cite{LPT.6}, requires a unique continuation property (UCP) result to assert Kalman algebraic, finite rank condition of the finite-dimensional unstable component of the overall system. To achieve this end, \cite{Lef:2011} establishes a UCP for a \uline{dynamic} coupled problem, by virtue of Carleman-type inequalities for \uline{parabolic} equations, coupled with elliptic estimates. In contrast, we only need to establish a UCP for a static eigenvalue problem [\eqref{1.5}, \eqref{1.6} of the present paper] still by Carleman-type estimates \cite{LPT.6}, a much more direct task. The original Carleman estimates, characterized by a suitable exponential weight function, were introduced in \cite{Car} in 1939 to establish the uniqueness of solutions for a PDE in two variables. The method of Carleman estimates was subsequently extended to the study of uniqueness in inverse problems, as first introduced in \cite{B.1, BK.1}. For further developments and applications in this context, see also \cite{K.1}. The subject has since grown substantially, with a vast and rich body of literature.

\section{Proof of Theorem \ref{Thm-UCP-1}}

	\nin \textbf{Step 0}. Without loss of generality, we may normalize the constants $\nu = \eta \equiv 1$.
	We can rewrite Equations \eqref{1.5a}-\eqref{1.5b} combined as in \eqref{2.2a} below, along with \eqref{1.5c} and \eqref{1.5d}, and the over-determination \eqref{1.6}
	\begin{subequations}\label{2.2}
		\begin{empheq}[left=\empheqlbrace]{gather}
			(-\Delta) \bbm \phi \\ \xi \ebm + (y_e \cdot \nabla) \bbm \phi \\ \xi \ebm + (\phi \cdot \nabla) \bbm y_e \\ B_e \ebm \hspace{6cm} \nonumber\\
			 \hspace{1.5cm} - (\xi \cdot \nabla) \bbm B_e \\ y_e \ebm - (B_e \cdot \nabla) \bbm \xi \\ \phi \ebm + \bbm \nabla p \\ 0 \ebm = \lambda \bbm \phi \\ \xi \ebm \text{ in } \Omega, \label{2.2a}\\[1mm]
			\div \bbm \phi \\ \xi \ebm \equiv \bbm 0 \\ 0 \ebm \text{ in } \Omega, \text{ and } \bbm \phi \\ \xi \ebm = \bbm 0 \\ 0 \ebm \text{ in } \omega. \label{2.2b} \hspace{2cm}
		\end{empheq}
	\end{subequations}
\nin Now we define a coordinate switching operator $\mathcal{S}$ such that $\ds \bbm \xi \\ \phi \ebm \mapsto \bbm \phi \\ \xi \ebm$ which is clearly bounded and continuous. Hence rewrite the above equation as 
\begin{subequations}
	\begin{empheq}[left=\empheqlbrace]{gather}
	(-\Delta) \bbm \phi \\ \xi \ebm + (y_e \cdot \nabla) \bbm \phi \\ \xi \ebm + (\phi \cdot \nabla) \bbm y_e \\ B_e \ebm \hspace{6cm} \nonumber\\
	\hspace{1.5cm} - (\xi \cdot \nabla) \ \calS \bbm y_e \\ B_e \ebm - (B_e \cdot \nabla) \ \calS \bbm \phi \\ \xi \ebm + \bbm \nabla p \\ 0 \ebm = \lambda \bbm \phi \\ \xi \ebm \text{ in } \Omega,\\[1mm]
	\div \bbm \phi \\ \xi \ebm \equiv \bbm 0 \\ 0 \ebm \text{ in } \Omega, \text{ and } \bbm \phi \\ \xi \ebm = \bbm 0 \\ 0 \ebm \text{ in } \omega. \hspace{2cm}
	\end{empheq}
\end{subequations}

\nin \textbf{Case 1}: We write initially the proof for the case where $\omega$ is at a positive distance from $\partial \Omega$: dist $(\partial \Omega, \partial \omega) > 0$. (Figs. \ref{fig:1}, \ref{fig:2}).\\
\nin \textbf{Step 1}. Henceforth, we introduce the state variable $\ds u = \bbm \phi \\ \xi \ebm$. Since $u = \{ \phi,\ \xi \} \equiv 0$ in $\omega$ by \eqref{2.2b}, then \eqref{2.2a}, yields $\ds \nabla p \equiv 0$ in $\omega$, hence $p = $ const in $\omega$. We may, and will, take $p \equiv 0$ in $\omega$, as $p$ is only identified up to a constant. Then we have:
\begin{equation}    \label{E4-2-1}
u|_{\partial\omega} \equiv 0; \quad \frac{\partial u}{\partial\nu}\bigg|_{\partial\omega} \equiv 0; \quad
p|_{\partial \omega} \equiv 0; \quad \frac{\partial p}{\partial\nu}\bigg|_{\partial\omega} \equiv 0,
\end{equation}
where $\ds \frac{\partial}{\partial\nu}$ denotes the normal derivative ($\nu =$ unit inward normal vector with respect to $\omega$).\\

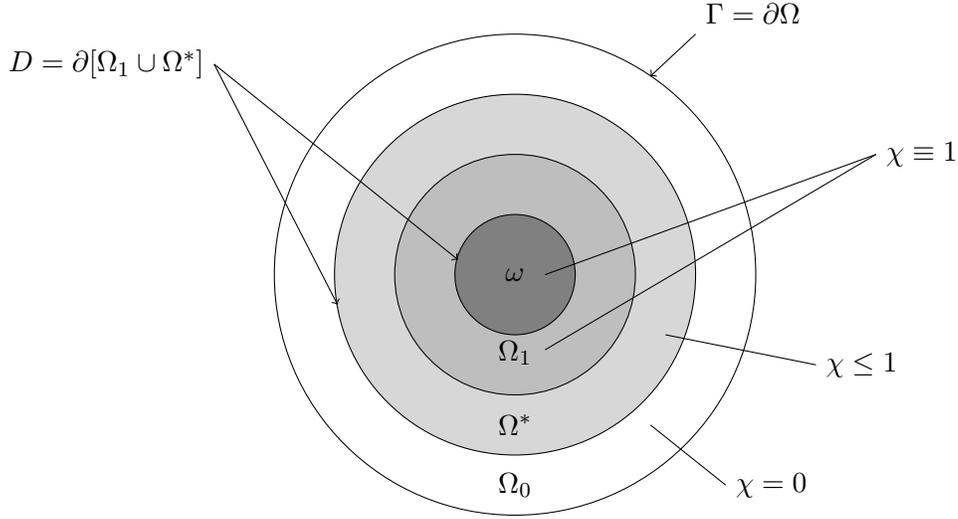
\begin{figure}[H]
	\centering
	\begin{tikzpicture}[scale=0.4]
	
	\draw (0,-1) node {$\chi \equiv 1$};
	\draw  (0,0) circle [radius = 8]; 
	\draw [fill={rgb, 255:red, 215; green, 215; blue, 215 }  ,fill opacity=1 ] (0,0) circle [radius = 6];
	\draw [fill={rgb, 255:red, 190; green, 190; blue, 190},fill opacity=1 ] (0,0) circle [radius = 4];
	\draw [fill={rgb, 255:red, 128; green, 128; blue, 128 }, ,fill opacity=1 ] (0,0) node {$\omega$} circle [radius = 2];

	\draw (0,-2.6) node {$\Omega_1$};
	\draw (0,-5) node {$\Omega^*$};
	\draw (0,-7) node {$\Omega_0$};
	
	\draw (10,-3) node[anchor = west] {$\chi \leq 1$} -- (5,-2);
	
	\draw (12,4) node[anchor = west] {$\chi \equiv 1$} -- (1,0);
	\draw (12,4) -- (1,-2.5);
	
	\draw (7,-7) node[anchor = west] {$\chi = 0$} -- (4.5,-5);
	\draw (-10,-3) node[anchor = east] {\qquad \quad};
	\draw[->] (-10,7) node[anchor = east] {$D = \partial[\Omega_1 \cup \Omega^*]$} -- (-1.9,0.5);
	\draw[->] (-10,7)  -- (-5.9,-1);
	\draw[->] (6,8) node[anchor = south west] {$\Gamma = \partial \Omega$} -- (4.5,6.6);
	\end{tikzpicture}
	\caption{Case 1: $G = \Omega_1 \cup \Omega^*$}
	\label{fig:1}
\end{figure}
\begin{figure}[H]
	\centering
	
	\tikzset{every picture/.style={line width=0.75pt}} 
	
	\begin{tikzpicture}[x=0.75pt,y=0.75pt,yscale=-1,xscale=0.8]
	
	\draw  [draw opacity=0][fill={rgb, 255:red, 215; green, 215; blue, 215 }  ,fill opacity=1 ][line width=0.75]  (459,154.8) -- (461,166.8) -- (462,177.8) -- (463,189.8) -- (465,201.8) -- (470,212.8) -- (478,221.8) -- (488,228.8) -- (498,232.8) -- (510,235.8) -- (523,237.8) -- (542,239.8) -- (430,240) -- (430,138.95) -- (442,140.8) -- (453,145.8) -- cycle ;
	\draw  [draw opacity=0][fill={rgb, 255:red, 215; green, 215; blue, 215 }  ,fill opacity=1 ][dash pattern={on 0.84pt off 2.51pt}][line width=0.75]  (181,161.4) -- (179,172.4) -- (179,184.4) -- (177,196.4) -- (174,209.4) -- (169,218.4) -- (163,225.4) -- (152,231.4) -- (142,234.4) -- (131,237.4) -- (118.81,238.94) -- (100,240) -- (211.44,240) -- (211.44,138.95) -- (196,141.4) -- (184,149.4) -- cycle ;
	\draw  [draw opacity=0][fill={rgb, 255:red, 190; green, 190; blue, 190 }  ,fill opacity=1 ][dash pattern={on 0.84pt off 2.51pt}] (390,140) -- (430,140) -- (430,240) -- (390,240) -- cycle ;
	\draw  [fill={rgb, 255:red, 190; green, 190; blue, 190 }  ,fill opacity=1 ] (210,140) -- (320,140) -- (320,240) -- (210,240) -- cycle ;
	\draw  [fill={rgb, 255:red, 128; green, 128; blue, 128 }  ,fill opacity=1 ][dash pattern={on 4.5pt off 4.5pt}] (320,140) -- (390,140) -- (390,240) -- (320,240) -- cycle ;
	\draw    (20,240) -- (630,240) ;
	\draw    (100,240) .. controls (232,240.4) and (140,140.4) .. (210,140) ;
	\draw    (210,140) -- (430,140) ;
	\draw    (320,140) -- (320,240) ;
	\draw    (330,280) -- (630,280) ;
	\draw [shift={(630,280)}, rotate = 180] [color={rgb, 255:red, 0; green, 0; blue, 0 }  ][line width=0.75]    (0,5.59) -- (0,-5.59)   ;
	\draw    (310,280) -- (20,280) ;
	\draw [shift={(20,280)}, rotate = 360] [color={rgb, 255:red, 0; green, 0; blue, 0 }  ][line width=0.75]    (0,5.59) -- (0,-5.59)   ;
	\draw    (70,260) -- (100,260) ;
	\draw [shift={(100,260)}, rotate = 180] [color={rgb, 255:red, 0; green, 0; blue, 0 }  ][line width=0.75]    (0,5.59) -- (0,-5.59)   ;
	\draw    (50,260) -- (20,260) ;
	\draw [shift={(20,260)}, rotate = 360] [color={rgb, 255:red, 0; green, 0; blue, 0 }  ][line width=0.75]    (0,5.59) -- (0,-5.59)   ;
	\draw    (600,260) -- (630,260) ;
	\draw [shift={(630,260)}, rotate = 180] [color={rgb, 255:red, 0; green, 0; blue, 0 }  ][line width=0.75]    (0,5.59) -- (0,-5.59)   ;
	\draw    (580,260) -- (550,260) ;
	\draw [shift={(550,260)}, rotate = 360] [color={rgb, 255:red, 0; green, 0; blue, 0 }  ][line width=0.75]    (0,5.59) -- (0,-5.59)   ;
	\draw    (170,260) -- (210,260) ;
	\draw [shift={(210,260)}, rotate = 180] [color={rgb, 255:red, 0; green, 0; blue, 0 }  ][line width=0.75]    (0,5.59) -- (0,-5.59)   ;
	\draw    (150,260) -- (100,260) ;
	\draw [shift={(100,260)}, rotate = 360] [color={rgb, 255:red, 0; green, 0; blue, 0 }  ][line width=0.75]    (0,5.59) -- (0,-5.59)   ;
	\draw    (510,260) -- (550,260) ;
	\draw [shift={(550,260)}, rotate = 180] [color={rgb, 255:red, 0; green, 0; blue, 0 }  ][line width=0.75]    (0,5.59) -- (0,-5.59)   ;
	\draw    (490,260) -- (430,260) ;
	\draw [shift={(430,260)}, rotate = 360] [color={rgb, 255:red, 0; green, 0; blue, 0 }  ][line width=0.75]    (0,5.59) -- (0,-5.59)   ;
	\draw    (280,260) -- (320,260) ;
	\draw [shift={(320,260)}, rotate = 180] [color={rgb, 255:red, 0; green, 0; blue, 0 }  ][line width=0.75]    (0,5.59) -- (0,-5.59)   ;
	\draw    (260,260) -- (210,260) ;
	\draw [shift={(210,260)}, rotate = 360] [color={rgb, 255:red, 0; green, 0; blue, 0 }  ][line width=0.75]    (0,5.59) -- (0,-5.59)   ;
	\draw    (420,260) -- (430,260) ;
	\draw [shift={(430,260)}, rotate = 180] [color={rgb, 255:red, 0; green, 0; blue, 0 }  ][line width=0.75]    (0,5.59) -- (0,-5.59)   ;
	\draw    (400,260) -- (390,260) ;
	\draw [shift={(390,260)}, rotate = 360] [color={rgb, 255:red, 0; green, 0; blue, 0 }  ][line width=0.75]    (0,5.59) -- (0,-5.59)   ;
	\draw    (364.55,260) -- (390,260) ;
	\draw [shift={(390,260)}, rotate = 180] [color={rgb, 255:red, 0; green, 0; blue, 0 }  ][line width=0.75]    (0,5.59) -- (0,-5.59)   ;
	\draw    (350,260) -- (320,260) ;
	\draw [shift={(320,260)}, rotate = 360] [color={rgb, 255:red, 0; green, 0; blue, 0 }  ][line width=0.75]    (0,5.59) -- (0,-5.59)   ;
	\draw    (430,140) .. controls (500,139.8) and (408,239.8) .. (550,239.8) ;
	\draw    (430,140) -- (430,240) ;
	
	\draw (311,269) node [anchor=north west][inner sep=0.75pt]    {$\Omega $};
	\draw (49,249) node [anchor=north west][inner sep=0.75pt]  [font=\footnotesize]  {$\Omega _{0}$};
	\draw (581,249) node [anchor=north west][inner sep=0.75pt]  [font=\small]  {$\Omega _{0}$};
	\draw (149,251) node [anchor=north west][inner sep=0.75pt]  [font=\footnotesize]  {$\Omega ^{*}$};
	\draw (489,251) node [anchor=north west][inner sep=0.75pt]  [font=\footnotesize]  {$\Omega ^{*}$};
	\draw (259,251) node [anchor=north west][inner sep=0.75pt]  [font=\footnotesize]  {$\Omega _{1}$};
	\draw (399,251) node [anchor=north west][inner sep=0.75pt]  [font=\footnotesize]  {$\Omega _{1}$};
	\draw (349,249) node [anchor=north west][inner sep=0.75pt]  [font=\footnotesize]  {$\omega $};
	\draw (251,109) node [anchor=north west][inner sep=0.75pt]    {$\chi \ \equiv 1$};

	\end{tikzpicture}
	\caption{Case 1: the cut-off function $\chi$}
	\label{fig:2}
\end{figure}
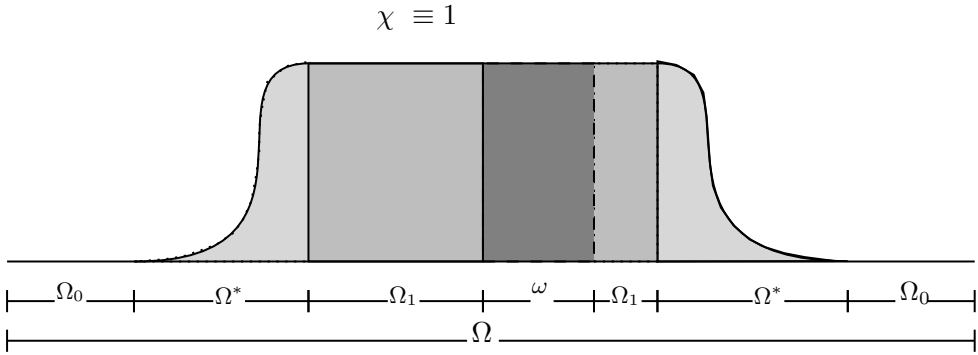

\noindent \textbf{Step 2}. \textit{The cut-off function $\chi$.}
Let $\chi$ be a smooth, non-negative, cut-off function defined as follows:
\begin{equation}    \label{E4-2-2}
\chi \equiv
\begin{cases}
1 & \mbox{in } \Omega_1 \cup \omega \\[2mm]
0 & \mbox{in } \Omega_0
\end{cases};
\quad \mbox{ supp } \chi \subset [\Omega_1 \cup \omega \cup \Omega^*],
\end{equation}
while monotonically decreasing from 1 to 0 in $\Omega^*$, with $\chi \equiv 0$ also in a small layer of $\Omega^*$ bordering $\Omega_0$ (Fig. \ref{fig:1} and Fig. \ref{fig:2}). Here,

(i) $\Omega_1$ is a smooth subdomain of $\Omega$ which surrounds and borders $\omega$ (Fig. \ref{fig:1});

(ii) in turn, $\Omega^*$ is a smooth subdomain of $\Omega$ which surrounds and borders $\Omega_1$ (Fig. \ref{fig:1});

(iii) in turn, $\Omega_0$ is a smooth subdomain of $\Omega: \Omega_0 \equiv \Omega \setminus \{\omega \cup \Omega_1 \cup \Omega^*\}$.\\

\noindent \textbf{Step 3}. \textit{The $(\chi \phi)$-problem.}
Multiply the $\phi$-equation re-written as in \eqref{1.5a'} by $\chi$ and obtain:
\begin{equation}\label{2.5}
	(- \Delta) (\chi \phi) + \calL_1(\chi \phi) - \calL_2(\chi \xi) + \nabla (\chi p) = \lambda (\chi \phi) + F_\chi^{1,0,0}(\phi,\xi,p) \mbox{ in } \Omega,
\end{equation}
or recalling (\hyperref[2.6]{1.8a-b})
\begin{multline}\label{E4-2-3}
	(- \Delta) (\chi \phi) + (y_e \cdot \nabla) (\chi \phi) + ((\chi \phi) \cdot \nabla)y_e\\ - (B_e \cdot \nabla)(\chi \xi) - ((\chi \xi) \cdot \nabla)B_e + \nabla (\chi p) = \lambda (\chi \phi) + F_\chi^{1,0,0}(\phi,\xi,p) \mbox{ in } \Omega,
\end{multline}
with forcing term expressed in terms of the resulting commutators
\begin{subequations}    \label{E4-2-4}
	\begin{align}
	F_\chi \equiv F^{1,0,0}_\chi(\phi,\xi,p) 
	& \equiv [\chi,\Delta]\phi + [\calL_1,\chi]\phi - [\calL_2,\chi]\xi + [\nabla,\chi]p           \label{E4-2-4a}
	\\[2mm]
	& =  \mbox{first order in } \phi; \mbox{ zero order in } p \text{ and } \xi; \label{E4-2-4b}\\
	& \mbox{ supp } F^{1,0,0}_\chi \subset \Omega^*.    \label{E4-2-4c}
	\end{align}
\end{subequations}
Notice that \eqref{E4-2-4c} holds true since $\chi \equiv 1$ on $\Omega_1$, on $\omega$, and on a small layer within $\Omega^*$, so that on the union taken over these three sets we have that $F_{\chi} \equiv 0$.
We recall that the commutator $[\chi, \Delta]$ is defined by $[\chi, \Delta]\phi = \chi \Delta \phi - \Delta(\chi \phi)$. Thanks to the Leibniz formula, $[\chi, \Delta]$ is a linear combination of derivatives of order $\geq 1$ of $\chi$ multiplied by derivatives of order $\leq 1$ of $\phi$. Consequently, the commutator $[\chi, \Delta]$ is of order $0+2-1=1$; the commutator $[\chi, \calL_i]$ is of order $0+1-1 = 0,\ i = 1,2$; the commutator $[\nabla, \chi]$ is of order $1+0-1=0$.\\

\nin \textbf{Step 4}. \textit{The $(\chi \xi)$-problem}. 
Next, we multiply the $\xi$-equation re-written as in \eqref{1.5b'} by $\chi$ and obtain
\begin{equation}
	(- \Delta)(\chi \xi) + \calM_1(\chi \xi) - \calM_2(\chi \phi) = \lambda (\chi \xi) + G^{1,0}_{\chi}(\xi,\phi) \quad \mbox{ in } \Omega,
\end{equation}
or
\begin{multline}
(- \Delta)(\chi \xi) + (y_e \cdot \nabla ) (\chi\xi) - ((\chi\xi) \cdot \nabla)y_e \\ - (B_e \cdot \nabla) (\chi\phi) + ((\chi\phi) \cdot \nabla) B_e = \lambda (\chi \xi) + G^{1,0}_{\chi}(\xi,\phi) \quad \mbox{ in } \Omega, \label{6E2-7}
\end{multline}
with forcing term expressed in terms of the resulting commutators
\begin{subequations} \label{6E2-9}
	\begin{align}
		G_{\chi} = G^{1,0}_{\chi}(\xi,\phi) & \equiv [\chi,\Delta]\xi + [\calM_1,\chi]\xi + [\calM_2,\chi]\phi, \label{6E2-9a}\\
		& = \text{first order in } \xi; \text{ zero order in } \phi; \text{ supp } G^{1,0}_{\chi} \subset \Omega^*. \label{6E2-9b}
	\end{align}
\end{subequations}
Notice that \eqref{6E2-9b} holds true, since as in the case for \eqref{E4-2-4c}, we have that $G_{\chi} \equiv 0$ on $\omega \cup \Omega_1 \cup \mbox{ [a small layer within $\Omega^*$]}$, since $\chi \equiv 1$ on such union.\\

\nin \textbf{Step 5}. \textit{The $(\chi u)$-problem $\chi u = \{ \chi \phi, \chi \xi \}$.} We combine Step 3 and Step 4 and obtain:
\begin{multline}\label{2.12}
	(-\Delta) \bpm \chi \bbm \phi \\ \xi \ebm \epm + (y_e \cdot \nabla) \bpm \chi \bbm \phi \\ \xi \ebm \epm + ((\chi \phi) \cdot \nabla) \bbm y_e \\ B_e \ebm \\ - ((\chi \xi) \cdot \nabla) \bbm B_e \\ y_e \ebm - (B_e \cdot \nabla) \bpm \chi \bbm \xi \\ \phi \ebm \epm + \bbm \nabla (\chi p) \\ 0 \ebm = \lambda \bpm \chi \bbm \phi \\ \xi \ebm \epm + \ds \bbm  F^{1,0,0}_{\chi} \\[1mm] G^{1,0}_{\chi} \ebm \text{ in } \Omega.
\end{multline}
Moreover, let (Fig. \ref{fig:1})
\begin{equation}\label{E4-2-5}
D = \partial \omega \cup \{\mbox{external boundary of } \Omega^*\} = \partial[\Omega_1 \cup \Omega^*].
\end{equation}
Since $\chi \equiv 0$ on $\Omega_0$ and in a small layer of $\Omega^*$ bordering $\Omega_0$, then $(\chi u) = \{ (\chi \phi), (\chi \xi) \}$ and $(\chi p)$ have zero Cauchy data on the [external boundary of $\Omega^*$] = [interior boundary of $\Omega_0$].  Moreover, since $u \equiv 0$ in $\omega$ and $p \equiv 0$ in $\omega$ by Step 1, then $(\chi u)$ and $(\chi p)$ have zero Cauchy data on $\partial \omega$. Thus:
\begin{equation}\label{E4-2-6}
(\chi u)|_D \equiv 0; \ \frac{\partial(\chi u)}{\partial\nu}\bigg|_D \equiv 0; \ (\chi p)|_{\partial\omega} \equiv 0; \ \frac{\partial(\chi p)}{\partial\nu}\bigg|_{\partial\omega} \equiv 0,
\end{equation}
where $\nu$ denotes a unit normal vector outward with respect to $[\Omega^* \cup \Omega_1]$ (Fig. \ref{fig:1}).\\

\noindent \textbf{Step 6}. \textit{A pointwise Carleman estimate.}
We shall invoke the following pointwise Carleman estimate for the Laplacian from \cite[Corollary 4.2, Eqn.\,(4.15), p.\,73]{L-T-Z.2}.\\

\begin{theorem}
	\label{thm4-2-1}
	The following pointwise estimate holds true at each point $x$ of a bounded domain $G$ in $\mathbb{R}^d$ for an $H^2$-function $w$, where $\epsilon > 0$ and $0 < \delta_0 < 1$ are arbitrary
	\begin{multline} \label{E4-2-7}
	\delta_0 \left[ 2 \rho \tau - \frac{\epsilon}{2}\right] e^{2\tau \psi(x)} |\nabla w(x)|^2+ [4 \rho k^2\tau^3(1-\delta_0) + \calO(\tau^2)] e^{2\tau \psi(x)} |w(x)|^2   \\
	\leq  
	\left(1 + \frac{1}{\epsilon}\right) e^{2\tau\psi(x)} |\Delta w(x)|^2 + \div V_w(x), \quad x \in G.
	\end{multline}
	Here: $\psi(x)$ is any strictly convex function over $G$, with no critical points in $\overline{G}$, to be chosen below in Step 11 when $G = \Omega_1 \cup \Omega^*$; $\rho > 0$ is a constant, defined by $\calH_\psi(x) \geq \rho I$, $x \in \overline{G}$, where $\calH_\psi$ denotes the (symmetric) Hessian matrix of $\psi(x)$ \cite[Eqn.\,(1.1.6), p.\,45]{L-T-Z.2}; $k > 0$ is a constant, defined by: $\mbox{inf }|\nabla \psi(x)| = k > 0$, where the inf is taken over $G$ \cite[Eqn.\,(1.1.7), p.\,45]{L-T-Z.2}; and $\tau$ is a free positive parameter, to be chosen sufficiently large. For what follows, it is not critical to recall what $\div V_w(x)$ is, only that, via the divergence theorem, we have
	\begin{equation}         \label{E4-2-8}
	\int_G \div V_w(x) dx = \int_{\partial G} V_w(x) \cdot \nu \, d\sigma = 0,
	\end{equation}
	whenever the {\em Cauchy data} of $w$ vanish on its boundary $\partial G$: $w|_{\partial G} \equiv 0; \ \nabla w|_{\partial G} \equiv 0$. In \eqref{E4-2-8}, $\nu$ is a unit normal vector outward with respect to $G$.
\end{theorem}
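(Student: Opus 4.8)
The plan is to prove the pointwise inequality \eqref{E4-2-7} by the classical conjugation-and-splitting calculus for Carleman estimates, carried out entirely at the pointwise level so that integration is deferred to the single divergence term. First I would introduce the conjugated variable $v = e^{\tau\psi}w$, i.e. $w = e^{-\tau\psi}v$, and compute the conjugated Laplacian $e^{\tau\psi}\Delta\big(e^{-\tau\psi}v\big) = \Delta v - 2\tau\,\nabla\psi\cdot\nabla v + \big(\tau^2|\nabla\psi|^2 - \tau\Delta\psi\big)v$. I would split this into its formally self-adjoint part $P_+v = \Delta v + \tau^2|\nabla\psi|^2\,v$ and its formally skew-adjoint part $P_-v = -2\tau\,\nabla\psi\cdot\nabla v - \tau(\Delta\psi)\,v$. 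Since $e^{2\tau\psi}|\Delta w|^2 = |P_+v+P_-v|^2 = |P_+v|^2+|P_-v|^2+2\,(P_+v)(P_-v) \ge 2\,(P_+v)(P_-v)$, the whole estimate reduces to extracting positivity, modulo an exact divergence, from the single pointwise cross term $2\,(P_+v)(P_-v)$.

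The core of the argument is a purely algebraic pointwise identity for $2\,(P_+v)(P_-v)$. Expanding the product yields four terms, and to each I would apply the Leibniz rule to peel off an exact divergence $\div V$. Two mechanisms generate the principal positive quantities. First, $-4\tau(\Delta v)(\nabla\psi\cdot\nabla v)$ converts, modulo a divergence and lower-order terms, into $+4\tau\,\calH_\psi(\nabla v,\nabla v)$, so the hypothesis $\calH_\psi \ge \rho I$ produces the gradient positivity $\ge 4\rho\tau|\nabla v|^2$. Second, the cubic term $-4\tau^3|\nabla\psi|^2\,v\,(\nabla\psi\cdot\nabla v) = -2\tau^3|\nabla\psi|^2\,\nabla\psi\cdot\nabla(v^2)$ converts, modulo a divergence, into $+4\tau^3\calH_\psi(\nabla\psi,\nabla\psi)\,v^2$, with the residual $\Delta\psi$-contribution cancelling exactly against the fourth term $-2\tau^3|\nabla\psi|^2(\Delta\psi)v^2$; hence $\calH_\psi \ge \rho I$ yields the zeroth-order positivity $\ge 4\rho\tau^3|\nabla\psi|^2\,v^2$. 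All remaining contributions are of strictly lower order in $\tau$ and are collected into the $\calO(\tau^2)$ coefficient.

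It remains to pass back to the $w$-variables. Using $\nabla v = e^{\tau\psi}(\nabla w + \tau\nabla\psi\,w)$ and $v^2 = e^{2\tau\psi}w^2$, a single Young inequality applied to the residual term coupling $\Delta w$ to the first-order quantity accounts both for the weight $(1+\tfrac{1}{\epsilon})$ in front of $e^{2\tau\psi}|\Delta w|^2$ and for the loss $\tfrac{\epsilon}{2}$ in the gradient coefficient. The parameter $\delta_0$ enters in converting $|\nabla v|^2$ to $e^{2\tau\psi}|\nabla w|^2$: since $|\nabla v|^2 = e^{2\tau\psi}\big(|\nabla w|^2 + 2\tau\,w\,\nabla\psi\cdot\nabla w + \tau^2|\nabla\psi|^2w^2\big)$, a Young step gives $4\rho\delta_0\tau|\nabla v|^2 \ge \delta_0\,2\rho\tau\,e^{2\tau\psi}|\nabla w|^2 - 4\rho\delta_0\tau^3|\nabla\psi|^2\,e^{2\tau\psi}w^2$, so I would spend only the $\delta_0$-fraction of the gradient positivity on the $w$-gradient term and reabsorb its $\tau^3$-order cost into the cubic positivity, leaving $4\rho\tau^3|\nabla\psi|^2(1-\delta_0)\,e^{2\tau\psi}w^2 \ge 4\rho k^2\tau^3(1-\delta_0)\,e^{2\tau\psi}w^2$ after the final reduction $|\nabla\psi|\ge k$. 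Identity \eqref{E4-2-8} is then immediate, since $\div V_w$ integrates to $\int_{\partial G}V_w\cdot\nu\,d\sigma$, and every term of $V_w$ is a product of $w$ or $\nabla w$ with smooth factors, hence vanishes once the Cauchy data $w|_{\partial G}\equiv 0,\ \nabla w|_{\partial G}\equiv 0$ are imposed.

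The step I expect to be the main obstacle is the bookkeeping in the pointwise identity for $2\,(P_+v)(P_-v)$: one must verify that the two dangerous $\tau^3$-order contributions carry the correct signs and that the $\Delta\psi$-residuals cancel \emph{exactly}, track the proliferating lower-order terms (those involving $\nabla(\Delta\psi)$, $\Delta\psi$, and mixed $v\,\nabla v$ products) into the $\calO(\tau^2)$ coefficient, and --- because the estimate is asserted pointwise rather than only after integration --- confirm that each discarded quantity is a genuine exact divergence and not merely an expression that integrates to zero.
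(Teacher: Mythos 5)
Your proposal addresses a statement the paper itself never proves: Theorem \ref{thm4-2-1} is imported verbatim from \cite[Corollary 4.2, Eqn.\,(4.15), p.\,73]{L-T-Z.2}, where it arises by specializing a pointwise multiplier identity developed for Schr\"odinger-type operators; the present paper only \emph{uses} it (Steps 7 and 10 onward). Your conjugation-and-splitting argument ($v = e^{\tau\psi}w$, splitting the conjugated Laplacian into $P_+$ and $P_-$, discarding $|P_+v|^2 + |P_-v|^2 \geq 0$, and mining the cross term $2(P_+v)(P_-v)$ for positivity modulo exact divergences) is therefore a genuinely self-contained alternative, and it is the classical one. Its key algebra checks out: the conjugation formula is correct; expanding the cross term and peeling divergences yields $4\tau\,\calH_\psi(\nabla v,\nabla v) + 4\tau^3\calH_\psi(\nabla\psi,\nabla\psi)\,v^2 + 2\tau\, v\,\nabla(\Delta\psi)\cdot\nabla v + \div V$, the $\tau^3$-level $\Delta\psi$-residuals cancel exactly as you claim, the leftover $2\tau\,v\,\nabla(\Delta\psi)\cdot\nabla v$ is absorbed by Young at cost $O(1)|\nabla v|^2 + O(\tau^2)v^2$, and your $\delta_0$-weighted conversion back to the $w$-variables is correct. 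Indeed your route gives a slightly \emph{stronger} inequality (constant $1$ rather than $1+\tfrac{1}{\epsilon}$ in front of $e^{2\tau\psi}|\Delta w|^2$, and a smaller loss in the gradient coefficient); the precise constants in \eqref{E4-2-7} are artifacts of the derivation in \cite{L-T-Z.2}, and a stronger pointwise inequality of course implies the stated one. Your verification of \eqref{E4-2-8} is also fine, since every component of $V_w$ is quadratic in $(v, \nabla v)$ with smooth coefficients, and $v$, $\nabla v$ vanish on $\partial G$ exactly when the Cauchy data of $w$ do.

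One point in your sketch is stated imprecisely and would be a genuine error if taken literally: you assert that $-4\tau(\Delta v)(\nabla\psi\cdot\nabla v)$ converts into $+4\tau\,\calH_\psi(\nabla v,\nabla v)$ ``modulo a divergence and lower-order terms.'' It does not: Leibniz gives $-4\tau(\Delta v)(\nabla\psi\cdot\nabla v) = 4\tau\,\calH_\psi(\nabla v,\nabla v) - 2\tau(\Delta\psi)|\nabla v|^2 + \div[\cdots]$, and the residual $-2\tau(\Delta\psi)|\nabla v|^2$ is of the \emph{same} order $\tau|\nabla v|^2$ as the positivity; since $\Delta\psi$ is the trace of $\calH_\psi$, it typically exceeds $4\rho$ (e.g.\ $\psi = |x|^2/2$ in dimension $d \geq 3$), so this term cannot be absorbed or discarded. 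It is rescued only by an \emph{exact} cancellation against the $+2\tau(\Delta\psi)|\nabla v|^2$ produced by the second cross term $-2\tau(\Delta\psi)\,v\,\Delta v$ through the identity $v\Delta v = \div(v\nabla v) - |\nabla v|^2$. In other words, the exact-cancellation phenomenon you correctly identified at the $\tau^3$ level is equally mandatory at the $\tau$ level. Since your plan does expand all four terms and peel a divergence from each, this cancellation will surface automatically once the computation is executed; but it must be filed with the exact cancellations, not with the discardable lower-order debris, and your write-up should say so.
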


\noindent \textbf{Step 7}. \textit{Pointwise Carleman estimates for $(\chi u)$.}
Next, we apply estimate \eqref{E4-2-7} with $w = (\chi u)$ solution of \eqref{2.12}. For definiteness, we select $\delta_0 = \frac{1}{2}$, $\epsilon = \frac{1}{2}$. We obtain
\begin{multline}\label{E4-2-9}
\left[ \rho \tau - \frac{1}{8}\right] e^{2\tau \psi(x)}
|\nabla (\chi u)(x)|^2+ [2 \rho k^2\tau^3+ \calO(\tau^2)] e^{2\tau \psi(x)} |(\chi u)(x)|^2\\[3mm]
\leq 3 e^{2\tau \psi(x)} |\Delta (\chi u)(x)|^2 + \div V_{(\chi u)}(x), \quad x \in G.
\end{multline}
Next, we integrate \eqref{E4-2-9} over the domain $G \equiv [\Omega_1 \cup \Omega^*]$ (Fig. \ref{fig:1}), thus obtaining
\begin{multline}\label{E4-2-10}
\left[ \rho\tau - \frac{1}{8}\right] \int_{\Omega_{1}\cup \Omega^*} e^{2\tau \psi(x)} |\nabla (\chi u)(x)|^2dx
+ [2 \rho k^2 \tau^3+ \calO(\tau^2)]\int_{\Omega_{1} \cup \Omega^*}e^{2\tau \psi(x)} |(\chi u)(x)|^2 dx\\
\leq 3 \int_{\Omega_{1}\cup \Omega^*} e^{2\tau \psi(x)} |\Delta (\chi u)(x)|^2dx + \int_{\partial[\Omega_{1}\cup \Omega^*]} \cancel{V_{(\chi u)}(x)} \cdot \nu \, dD,
\end{multline}
where, on the RHS of \eqref{E4-2-10}, the boundary integral over $D \equiv \partial[\Omega_1 \cup \Omega^*] =$ the boundary of $\Omega_1 \cup \Omega^*$, see \eqref{E4-2-5} and Fig. \ref{fig:1}, vanishes in view of \eqref{E4-2-8} with $w = (\chi u)$ having null Cauchy data on $D$, by virtue of (the LHS of) \eqref{E4-2-6}.\\

\noindent \textbf{Step 8}. \textit{Bound on the RHS of \eqref{E4-2-10}.} Here, we estimate the RHS of \eqref{E4-2-10}. Returning to the $(\chi u)$-problem \eqref{2.12}, we rewrite it over $G \equiv [\Omega_1 \cup \Omega^*]$ as

\begin{multline}\label{6E2-17}
\Delta \bpm \chi \bbm \phi \\ \xi \ebm \epm = (y_e \cdot \nabla) \bpm \chi \bbm \phi \\ \xi \ebm \epm + ((\chi \phi) \cdot \nabla) \bbm y_e \\ B_e \ebm \\ - ((\chi \xi) \cdot \nabla) \bbm B_e \\ y_e \ebm - (B_e \cdot \nabla) \bpm \chi \bbm \xi \\ \phi \ebm \epm + \bbm \nabla (\chi p) \\ 0 \ebm - \lambda \bpm \chi \bbm \phi \\ \xi \ebm \epm - \bbm F^{1,0,0}_{\chi} \\ G^{1,0}_{\chi} \ebm
\end{multline}
and multiply across by $e^{\tau \psi(x)}$ to get
\begin{multline}\label{6E2-18}
e^{\tau \psi(x)} \Delta \bpm \chi \bbm \phi \\ \xi \ebm \epm = (e^{\tau \psi(x)} y_e \cdot \nabla) \bpm \chi \bbm \phi \\ \xi \ebm \epm + (e^{\tau \psi(x)}(\chi \phi) \cdot \nabla) \bbm y_e \\ B_e \ebm - (e^{\tau \psi(x)}(\chi \xi) \cdot \nabla) \bbm B_e \\ y_e \ebm \\ - ( e^{\tau \psi(x)}B_e \cdot \nabla) \bpm \chi \bbm \xi \\ \phi \ebm \epm + e^{\tau \psi(x)} \bbm \nabla (\chi p) \\ 0 \ebm - \lambda e^{\tau \psi(x)} \bpm \chi \bbm \phi \\ \xi \ebm \epm - e^{\tau \psi(x)} \bbm F^{1,0,0}_{\chi} \\ G^{1,0}_{\chi} \ebm.
\end{multline}
Recalling $(y_e, B_e) \in (W^{2,q}(\Omega))^d \times (W^{2,q}(\Omega))^d$ by Theorem \ref{Thm-1.1}, as well as the embedding $W^{1,q}(\Omega) \hookrightarrow C(\overline{\Omega})$ for $q > d$, \cite[p 97, for $\Omega$ having cone property]{A:1975} \cite[p. 79, requiring $C^1$-boundary]{SK:1989}, we have $|\nabla y_e(x)| + |\nabla B_e(x)| \leq C_{y_e, B_e}, \ x \in \Omega$, for $q > d$, as assumed. In view of this, we return to \eqref{6E2-18} and obtain
\begin{multline}\label{6E2-19}
e^{2\tau \psi(x)} \left| \Delta \bpm \chi \bbm \phi \\ \xi \ebm \epm (x) \right|^2 
\leq c_e e^{2\tau \psi(x)} \left\{ \left|\nabla \bpm \chi \bbm \phi \\ \xi \ebm \epm(x) \right|^2 + 
\abs{(\chi \phi)(x)}^2 + \abs{(\chi \xi)(x)}^2 \right\} \\
+ \ c_{\lambda} e^{2\tau \psi(x)} \left|\left(\chi \bbm \phi \\ \xi \ebm \right)(x)\right|^2 + e^{2\tau \psi(x)} \abs{\nabla (\chi p)}^2 +  e^{2\tau \psi(x)} \left| \bbm F^{1,0,0}_{\chi}(\phi,\xi , p)(x)\\[1mm]
G^{1,0}_{\chi}(\xi, \phi)(x) \ebm \right|^2, \\
x \in G,
\end{multline}
$c_e = $ a constant depending on $y_e$ and $B_e$, $c_{\lambda} = |\lambda|^2 + 1$. Thus, integrating \eqref{6E2-19} over $G \equiv [\Omega_1 \cup \Omega^*]$ as required by \eqref{E4-2-10} yields with $u = \bbm \phi \\ \xi \ebm$
\begin{multline}        \label{6E2-20}
\int_{\Omega_{1} \cup \Omega^*} e^{2\tau \psi(x)} |\Delta(\chi u)(x)|^2 dx
\leq  C_{\lambda, e} 
\int_{\Omega_{1} \cup \Omega^*} e^{2\tau \psi(x)} \left[ \abs{\nabla (\chi u)(x)}^2+ \abs{(\chi u)(x)}^2 \right]dx\\
+  \int_{\Omega_{1} \cup \Omega^*} e^{2\tau \psi(x)} \abs{\nabla (\chi p)(x)}^2 dx
+  \int_{\Omega_{1} \cup \Omega^*} e^{2\tau \psi(x)} \left| \bbm
F^{1,0,0}_{\chi}(\phi, \xi, p)(x)\\[1mm]
G^{1,0}_{\chi}(\xi, \phi)(x)
\ebm \right|^2 dx,
\end{multline}
$C_{\lambda, e} = $ a constant depending on $\lambda, y_e$ and $B_e$.\\

\nin We now recall from \eqref{E4-2-10} and \eqref{6E2-9} that $F^{1,0,0}_{\chi}(\phi, \xi, p)$ is an operator which is first order in $\phi$ and zero order in $p$ and $\xi$, while $G^{1,0}_{\chi}(\xi, \phi)$ is first order in $\xi$ and zero order $\phi$; and moreover, that their support is in $\Omega^*: \mbox{supp } F_{\chi} \subset \Omega^*, \mbox{ supp } G_{\chi} \subset \Omega^*$. 
Thus, \eqref{6E2-20} becomes explicitly, still with $u = \bbm \phi \\ \xi \ebm$:
\begin{multline}        \label{6E2-21}
\int_{\Omega_{1} \cup \Omega^*} e^{2\tau \psi(x)} 
|\Delta (\chi u)(x)|^2dx \leq  C_{\lambda, e} \int_{\Omega_{1} \cup \Omega^*} e^{2\tau \psi(x)}  
[\nabla (\chi u)(x)|^2 + |(\chi u)(x)|^2]dx \\[1mm]
+ \int_{\Omega_{1} \cup \Omega^*} e^{2\tau \psi(x)} |\nabla (\chi p)(x)|^2dx +  c_\chi
\int_{\Omega^*} e^{2\tau \psi(x)} [|\nabla u(x)|^2+ |u(x)|^2 + |p(x)|^2]dx,
\end{multline}
which is the sought-after bound on the last term of the RHS of \eqref{E4-2-10}. In \eqref{6E2-21}, $c_\chi$ is a constant depending on $\chi$.

\nin \textbf{Step 9.} \textit{Final estimate for $(\chi u)$-problem \eqref{2.12}, $u = \bbm \phi \\ \xi \ebm$.} We substitute \eqref{6E2-21} into the RHS of inequality \eqref{E4-2-10}, and obtain
\begin{multline}\label{6E2-22}
\left[\rho \tau - \frac{1}{8}\right]
\int_{\Omega_{1} \cup \Omega^*} e^{2\tau \psi(x)} |\nabla (\chi u)(x)|^2dx + \left[2\rho k^2\tau^3 +\calO(\tau^2)\right] \int_{\Omega_{1} \cup \Omega^*} e^{2\tau \psi(x)} |(\chi u)(x)|^2dx
\\[1mm]
\leq 
C_{\lambda,e} \int_{\Omega_{1} \cup \Omega^*} e^{2\tau \psi(x)} [|\nabla (\chi u)(x)|^2+ |(\chi u)(x)|^2]dx
+ 3 \int_{\Omega_{1} \cup \Omega^*} e^{2\tau \psi(x)} |\nabla (\chi p)(x)|^2dx
\\[1mm]
+ \ 
c_\chi  \int_{\Omega^*} e^{2\tau \psi(x)} [|\nabla u(x)|^2+ |u(x)|^2 + |p(x)|^2]dx.
\end{multline}
Moving the first integral term on the RHS of inequality \eqref{6E2-22} to the LHS of such inequality then yields for $\tau$ sufficiently large:
\begin{multline} \label{6E2-23}
\left\{ \left[\rho \tau - \frac{1}{8}\right] - C_{\lambda,e} \right\}
\int_{\Omega_{1} \cup \Omega^*} e^{2\tau \psi(x)} |\nabla (\chi u)(x)|^2dx
\\[1mm] 
+ \left[2\rho k^2\tau^3 +\calO(\tau^2)- C_{\lambda,e} \right]
\int_{\Omega_{1} \cup \Omega^*} e^{2\tau \psi(x)} |(\chi u)(x)|^2dx
\\[1mm] 
\leq 3 \int_{\Omega_{1} \cup \Omega^*} e^{2\tau \psi(x)} |\nabla (\chi p)(x)|^2dx
+ c_{\chi}  \int_{\Omega^*} e^{2\tau \psi(x)} [|\nabla u(x)|^2+ |u(x)|^2+|p(x)|^2]dx.
\end{multline}
Inequality \eqref{6E2-23} is our final estimate for the $(\chi u)$-problem in \eqref{2.12}, \eqref{E4-2-4}, \eqref{6E2-9}.\\

\nin \textbf{Step 10.} \emph{The $(\chi p)$-problem.}
We need to estimate the first integral term on the RHS of inequality \eqref{6E2-23}. This will be accomplished in \eqref{6E2-31} below. To this end, we need to obtain preliminarily the PDE-problem satisfied by $(\chi p)$ on $G \equiv \Omega_1 \cup \Omega^*$. This task will be accomplished in this step. Accordingly, we return to the $\phi$-equation. \eqref{1.5a'}, take here the operation of ``div'' across, use $div \phi \equiv 0$ and $\div \xi \equiv 0$ from \eqref{2.2b} = \eqref{1.5c}, and obtain, recalling $\calL_1(\phi)$ in \eqref{2.6a} and $\calL_2(\xi)$ in \eqref{2.6b}
\begin{subequations}
	\begin{equation}\label{6E2-24a}
	\Delta p = -\div \calL_1(\phi) + \div \calL_2(\xi)  \mbox{ in } \Omega,
	\end{equation}
	where, actually \cite[Eqn.\,(5.21)]{RT:2008}, \cite[Eqn.\,(3.24)]{RT1:2009},
	\begin{align}
	\div \calL_1(\phi) = 2\{(\partial_x y_e \cdot \nabla) \phi \} = 2\{(\partial_x \phi \cdot \nabla)y_e\},  \label{6E2-24b}\\
	\div \calL_2(\xi) = 2\{(\partial_x B_e \cdot \nabla) \xi \} = 2\{(\partial_x \xi \cdot \nabla)B_e\}.  \label{6E2-24c}
	\end{align}
	are first-order differential operators in $\phi$ and $\xi$ respectively.
\end{subequations}
The proof of \eqref{6E2-24b} uses $\div \phi \equiv 0$ and $\div y_e \equiv 0$ in $\Omega$ from \eqref{2.2b} = \eqref{1.2c} and \eqref{1.1c}. 
Next, multiply \eqref{6E2-24a} by $\chi$. We obtain
\begin{subequations}    \label{6E2-25}
	\begin{empheq}[left=\empheqlbrace]{align}
	\Delta (\chi p) & =  -\div \calL_1(\chi \phi) + \div \calL_2(\chi \xi) + T_\chi^{0,0,1}(\phi, \xi, p) \quad \mbox{ in } \Omega; \label{6E2-25a}\\ 
	\frac{\partial(\chi p)}{\partial\nu}\bigg|_D & =  0, \quad (\chi p)|_D =0, \quad D = \partial[\Omega_1 \cup \Omega^*].    \label{6E2-25b}
	\end{empheq}
	\begin{multline}\label{6E2-25c}
	T_\chi^{0,0,1}(\phi, \xi, p)
	\equiv  [\Delta, \chi]p + [\div \calL_1, \chi] \phi - [\div \calL_2, \chi] \xi  =  \mbox{zero order in $\phi$ and $\xi$}\\
	\text{by \eqref{6E2-24b} and by \eqref{6E2-24c}; first order in $p$; supp $T_\chi^{0,0,1} \subset \Omega^*$},
	\end{multline}
\end{subequations}
while the B.C.s \eqref{6E2-25b} on the boundary $D$ defined by \eqref{E4-2-5} follow for two reasons:
\begin{enumerate}[(i)]
	\item the RHS of \eqref{E4-2-6} on $(\chi p)$ on $\partial \omega$; actually, the RHS of \eqref{E4-2-1} since $\chi \equiv 1$ on $\omega$;
	\item $\chi \equiv 0$ up to the external boundary of $\Omega^*$ and a small layer of $\Omega^*$ bordering $\Omega_0$, so that $(\chi p) = 0$, $\ds \frac{\partial (\chi p)}{\partial\nu} = 0$, on such external boundary of $\Omega^*$. 
\end{enumerate}
Thus, \eqref{6E2-25b} is justified. In \eqref{6E2-25b}, the reason for supp $T_\chi^{0,0,1} \subset \Omega^*$ is the same as in \eqref{6E2-25c} and \eqref{6E2-9b}.\\

\nin Next, we apply the pointwise Carleman estimate \eqref{E4-2-7} to problem \eqref{6E2-25a}--\eqref{6E2-25b}, that is for $w = (\chi p)$. We obtain with $G = \Omega_1 \cup \Omega^*$:
\begin{multline}
\delta_0 \left[2\rho \tau - \frac{\epsilon}{2}\right]
e^{2\tau \psi(x)} |\nabla (\chi p)(x)|^2
+ \left[4\rho k^2\tau^3(1-\delta_0) +\calO(\tau^2)\right]
e^{2\tau \psi(x)} |(\chi p)(x)|^2
\\[1mm] 
\leq  \left(1+\frac{1}{\epsilon}\right) e^{2\tau \psi(x)} |\Delta (\chi p)(x)|^2
+ \div V_{(\chi p)}(x), \quad x \in G.       \label{6E2-26}
\end{multline}
Again, it is not critical to recall what $\div V_{(\chi p)}(x)$ is, only the vanishing relationship \eqref{E4-2-8} (for $w = (\chi p)$) on an appropriate bounded domain $G$. Indeed, we shall take again $G = \Omega_1 \cup \Omega^*$, and integrate inequality \eqref{6E2-26} over $G$ (after selecting again $\delta_0 = \frac{1}{2}$, $\epsilon = \frac{1}{2}$), and obtain
\begin{multline}\label{6E2-27}
\left[\rho \tau - \frac{1}{8}\right] \int_{\Omega_{1} \cup \Omega^*} e^{2\tau \psi(x)} |\nabla (\chi p)(x)|^2dx 
+ \ \left[2\rho k^2\tau^3 +\calO(\tau^2)\right] \int_{\Omega_{1} \cup \Omega^*} e^{2\tau \psi(x)} |(\chi p)(x)|^2dx
\\[1mm] 
\leq 3 \int_{\Omega_{1} \cup \Omega^*} e^{2\tau \psi(x)} |\Delta (\chi p)(x)|^2dx
+ \int_{\partial[\Omega_{1}\cup \Omega^*]} \cancel{V_{(\chi p)} (x)} \cdot \nu \, dD,
\end{multline}
where, on the RHS of \eqref{6E2-27}, the boundary integral over $D \equiv \partial[\Omega_1 \cup \Omega^*] = [\partial \omega \, \cup$ external boundary of $\Omega^*]$, see \eqref{E4-2-5} and Fig. \ref{fig:1}, again vanishes in view of \eqref{6E2-25b} for $w = (\chi p)$. Thus, the vanishing of the last integral term of \eqref{6E2-27} is justified.\\

\nin \textbf{Step 11.} Here we now estimate the last integral term on the RHS of \eqref{6E2-27}. We multiply Eqn.\eqref{6E2-25a} by $e^{\tau \psi (x)}$, thus obtaining
\begin{align}          
e^{\tau\psi(x)} \Delta(\chi p) 
&=  -e^{\tau\psi(x)} \div \calL_1(\chi \phi) + e^{\tau\psi(x)} \div \calL_2(\chi \xi) + e^{\tau\psi(x)} T_\chi^{0,0,1}(\phi, \xi, p), \label{6E2-28} \\
e^{2\tau\psi(x)} |\Delta(\chi p)(x)|^2 &\leq c e^{2\tau\psi(x)} \left\{ |\div \calL_1(\chi \phi)(x)|^2
+ |\div \calL_2(\chi \xi)(x)|^2 \right. \nonumber \\
& \hspace{6cm} \left. + |T_\chi^{0,0,1}(\phi, \xi, p)(x)|^2 \right\}, \quad x \in G. \label{6E2-29}
\end{align}
We now integrate \eqref{6E2-29} over $G \equiv [\Omega_1 \cup \Omega^*]$. In doing so, we recall from \eqref{6E2-24b}, \eqref{6E2-24c} that $[\div \calL_1]$ and $[\div \calL_2]$ are first-order operators, and accordingly, from \eqref{6E2-25c}, that $T_\chi^{0,0,1}(\phi, \xi, p)$ is an operator which is zero order in $\phi$ and $\xi$; and first order in $p$; and that $T_\chi^{0,0,1}(\phi, \xi, p)$ has support in $\Omega^*$. We thus obtain from \eqref{6E2-29}
\begin{multline}          \label{6E2-30}
\int_{\Omega_{1} \cup \Omega^*}
e^{2\tau \psi(x)} |\Delta(\chi p)(x)|^2dx \\
\leq C_{y_{e},B_e} \int_{\Omega_{1} \cup \Omega^*}
e^{2\tau \psi(x)}\left[ \abs{\nabla(\chi \phi)(x)}^2 + \abs{(\chi \phi)(x)}^2 + \abs{\nabla(\chi \xi)(x)}^2 + \abs{(\chi \xi)(x)}^2 \right]dx\\[1mm]
+ C_\chi \int_{\Omega^*} e^{2\tau \psi(x)} \left[|\nabla p(x)|^2 + |p(x)|^2+|\phi(x)|^2 + |\xi(x)|^2 \right]dx
\end{multline}
with constant $C_\chi$ depending on $\chi$.\\

\nin \textbf{Step 12.} \textit{Final estimate of the $(\chi p)$-problem.} We now substitute \eqref{6E2-30} into the RHS of \eqref{6E2-27}, divide across by $[\rho \tau - \frac{1}{8}] > 0$ for $\tau$ large and obtain
\begin{multline}\label{6E2-31}
\int_{\Omega_{1} \cup \Omega^*}
e^{2\tau \psi(x)} |\nabla(\chi p)(x)|^2dx
+ \frac{[2\rho k^2\tau^3 + \calO(\tau^2)]}{\left[\rho\tau - \, \frac{1}{8}\right]}
\int_{\Omega_{1} \cup \Omega^*}
e^{2\tau \psi(x)} |(\chi p)(x)|^2dx
\\[1.5mm]
\leq \frac{C_{y_{e},B_e}}{\left(\rho\tau - \, \frac{1}{8}\right)}
\int_{\Omega_{1} \cup \Omega^*}
e^{2\tau \psi(x)} \left[ \abs{\nabla(\chi \phi)(x)}^2 + \abs{(\chi \phi)(x)}^2 + \abs{\nabla(\chi \xi)(x)}^2 + \abs{(\chi \xi)(x)}^2 \right]dx
\\[1mm]
+ \frac{C_\chi}{\left(\rho\tau - \, \frac{1}{8}\right)}
\int_{\Omega^*}
e^{2\tau \psi(x)} \left[|\nabla p(x)|^2 + |p(x)|^2 + |\phi(x)|^2 + |\xi(x)|^2 \right]dx.
\end{multline}
Inequality \eqref{6E2-31} is our final estimate on the $(\chi p)$-problem \eqref{6E2-25a}.\\

\nin \textbf{Step 13.} \textit{Combining the ($\chi u$)-estimate \eqref{6E2-23} with the ($\chi p$)-estimate \eqref{6E2-31}.} We return to estimate \eqref{6E2-23} and add to each side the term
$$
\frac{[2\rho k^2\tau^3+\calO(\tau^2)]}{\left[\rho\tau - \, \frac{1}{8}\right]}
\int_{\Omega_{1} \cup \Omega^*}
e^{2\tau \psi(x)} |(\chi p)(x)|^2dx
$$
to get
\begin{multline}
\left\{\left[\rho\tau - \, \frac{1}{8}\right] - C_{\lambda, e}\right\}
\int_{\Omega_{1} \cup \Omega^*}
e^{2\tau \psi(x)} |\nabla (\chi u)(x)|^2dx \\[1mm]
 + \left[2\rho k^2\tau^3+ \calO(\tau^2) - C_{\lambda, e} \right]
\int_{\Omega_{1} \cup \Omega^*}
e^{2\tau \psi(x)} |(\chi u)(x)|^2dx + \frac{\left[2\rho k^2\tau^3+ \calO(\tau^2)\right]}{\left[\rho\tau - \frac{1}{8}\right]}
\int_{\Omega_{1} \cup \Omega^*}
e^{2\tau \psi(x)} |(\chi p)(x)|^2dx
 \\[1mm]
 \leq  3 \int_{\Omega_{1} \cup \Omega^*}
e^{2\tau \psi(x)} |\nabla (\chi p)(x)|^2dx + \frac{\left[2\rho k^2\tau^3+ \calO(\tau^2)\right]}
{\left[\rho\tau - \frac{1}{8}\right]}
\int_{\Omega_{1} \cup \Omega^*}
e^{2\tau \psi(x)} |(\chi p)(x)|^2dx
 \\[1mm]
 + c_{\chi} \int_{\Omega^*}
e^{2\tau \psi(x)} [|\nabla u(x)|^2 + |u(x)|^2 + |p(x)|^2]dx. \label{6E2-32}
\end{multline}
Next, we substitute inequality \eqref{6E2-31} for the first two integral terms on the RHS of \eqref{6E2-32}, and obtain
\begin{align}
& \left\{\left[\rho\tau - \, \frac{1}{8}\right] - C_{\lambda,e} \right\}
\int_{\Omega_{1} \cup \Omega^*}
e^{2\tau \psi(x)} |\nabla (\chi u)(x)|^2dx
\nonumber \\[1mm]
& + \ 
\left\{\left[2\rho k^2\tau^3+ \calO(\tau^2)\right]- C_{\lambda,e} \right\}
\int_{\Omega_{1} \cup \Omega^*}
e^{2\tau \psi(x)} |(\chi u)(x)|^2dx
\nonumber \\[1mm]
& + \ 
\frac{\left[2\rho k^2\tau^3+ \calO(\tau^2)\right]}{\left[\rho\tau - \frac{1}{8}\right]}
\int_{\Omega_{1} \cup \Omega^*}
e^{2\tau \psi(x)} |(\chi p)(x)|^2dx
\nonumber \\[1mm]
\leq & \ 
\frac{C_{y_{e},B_e}}{\left(\rho\tau - \frac{1}{8}\right)}
\int_{\Omega_{1} \cup \Omega^*}
e^{2\tau \psi(x)} 
\left[\abs{\nabla(\chi \phi)(x)}^2 + \abs{(\chi \phi)(x)}^2 + \abs{\nabla(\chi \xi)(x)}^2 + \abs{(\chi \xi)(x)}^2 \right]dx
\nonumber \\[1mm]
& + \ 
\frac{C_\chi}{\left(\rho\tau - \frac{1}{8}\right)}
\int_{\Omega^*}e^{2\tau \psi(x)} 
\left[|\nabla p(x)|^2 + |p(x)|^2 + |\phi(x)|^2 + |\xi(x)|^2 \right]dx
\nonumber \\[1mm]
& + \ c_{\chi} \int_{\Omega^*}
e^{2\tau \psi(x)} \left[ |\nabla u(x)|^2 + |u(x)|^2 + |p(x)|^2  \right] dx. \label{6E2-33}
\end{align}
Recalling that $u= \bbm \phi \\ \xi \ebm$, we re-write \eqref{6E2-33} explicitly as
\begin{align}
& \left\{\left[\rho\tau - \, \frac{1}{8}\right] - C_{\lambda,e} \right\}
\int_{\Omega_{1} \cup \Omega^*}
e^{2\tau \psi(x)} \left[ |\nabla (\chi \phi)(x)|^2 + |\nabla (\chi \xi)(x)|^2 \right]dx \nonumber \\[3mm]
& + \ 
\left\{\left[2\rho k^2\tau^3+ \calO(\tau^2)\right]- C_{\lambda,e} \right\}
\int_{\Omega_{1} \cup \Omega^*}
e^{2\tau \psi(x)} \left[|(\chi \phi)(x)|^2 + (\chi \xi)(x)|^2 \right] dx
\nonumber \\[1mm]
& + \ 
\frac{\left[2\rho k^2\tau^3+ \calO(\tau^2)\right]}{\left[\rho\tau - \frac{1}{8}\right]}
\int_{\Omega_{1} \cup \Omega^*}
e^{2\tau \psi(x)} |(\chi p)(x)|^2dx
\nonumber \\[1mm]
\leq & \ 
\frac{C_{y_{e},B_e}}{\left(\rho\tau - \frac{1}{8}\right)}
\int_{\Omega_{1} \cup \Omega^*}
e^{2\tau \psi(x)} \left[|\nabla (\chi \phi)(x)|^2+|(\chi \phi)(x)|^2 + |\nabla (\chi \xi)(x)|^2+|(\chi \xi)(x)|^2 \right]dx
\nonumber \\[1mm]
& + \ 
\frac{C_\chi}{\left(\rho\tau - \frac{1}{8}\right)}
\int_{\Omega^*}e^{2\tau \psi(x)} 
\left[|\nabla p(x)|^2 + |p(x)|^2 + |\phi(x)|^2 + |\xi(x)|^2\right]dx
\nonumber \\[3mm]
& \hspace{-8pt} + \ c_{\chi} \int_{\Omega^*}
e^{2\tau \psi(x)} \left[ |\nabla \phi(x)|^2 + |\nabla \xi(x)|^2 + |\phi(x)|^2 + |\xi(x)|^2 +|p(x)|^2 \right] dx. \label{6E2-34}
\end{align}

\smallskip  \noindent \textbf{Step 14.} \textit{Final estimate of problem \eqref{2.2a}--\eqref{2.2b}.} Finally, we combine the integral terms with the same integrand on the LHS of \eqref{6E2-34} and obtain the final sought-after estimate which we formalize as a lemma.

\begin{lemma}
	The following inequality holds true for all $\tau$ sufficiently large:
	\begin{align}
	& \left\{\left[\rho\tau - \, \frac{1}{8}\right] - C_{\lambda, e} -
	\frac{C_{y_{e},B_e}}{\left(\rho\tau - \frac{1}{8}\right)}\right\}
	\int_{\Omega_{1} \cup \Omega^*}
	e^{2\tau \psi(x)} \left[ |\nabla (\chi \phi)(x)|^2 + |\nabla (\chi \xi)(x)|^2 \right] dx \nonumber \\[1mm]
	& + \ 
	\Bigg\{\left[2\rho k^2\tau^3+ \calO(\tau^2)\right] - C_{\lambda, e} -
	\frac{C_{y_{e},B_e}}{\left(\rho\tau- \frac{1}{8}\right)}\Bigg\}
	\int_{\Omega_{1} \cup \Omega^*}
	e^{2\tau \psi(x)} \left[|(\chi \phi)(x)|^2 + (\chi \xi)(x)|^2 \right] dx
	\nonumber \\[1mm]
	& + \ 
	\frac{\left[2\rho k^2\tau^3+ \calO(\tau^2)\right]}{\left[\rho\tau - \frac{1}{8}\right]}
	\int_{\Omega_{1} \cup \Omega^*}
	e^{2\tau \psi(x)} |(\chi p)(x)|^2dx
	\nonumber \\[1mm]
	\leq & \ 
	\frac{C_\chi}{\left(\rho\tau - \frac{1}{8}\right)}
	\int_{\Omega^*}
	e^{2\tau \psi(x)} 
	\left[|\nabla p(x)|^2 + |p(x)|^2 + |\phi(x)|^2 + |\xi(x)|^2\right]dx
	\nonumber \\[1mm]
	& + \ c_{\chi} \int_{\Omega^*}
	e^{2\tau \psi(x)} \left[ |\nabla \phi(x)|^2 + |\nabla \xi(x)|^2 + |\phi(x)|^2 + |\xi(x)|^2 + |p(x)|^2 \right] dx. \label{6E2-35}
	\end{align}
\end{lemma}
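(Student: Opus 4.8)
The plan is to derive \eqref{6E2-35} from inequality \eqref{6E2-34} by a single algebraic rearrangement: transfer the first integral term on the right-hand side of \eqref{6E2-34} over to the left, and then collect terms sharing the same integrand, the same weight $e^{2\tau\psi}$, and the same domain of integration $\Omega_1 \cup \Omega^*$.

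First I would split the first right-hand side integral of \eqref{6E2-34},
$$
\frac{C_{y_{e},B_e}}{\left(\rho\tau - \frac{1}{8}\right)}
\int_{\Omega_{1} \cup \Omega^*}
e^{2\tau \psi(x)} \left[|\nabla (\chi \phi)(x)|^2+|(\chi \phi)(x)|^2 + |\nabla (\chi \xi)(x)|^2+|(\chi \xi)(x)|^2 \right]dx,
$$
into its first-order part, carrying the integrand $|\nabla(\chi\phi)|^2 + |\nabla(\chi\xi)|^2$, and its zeroth-order part, carrying the integrand $|(\chi\phi)|^2 + |(\chi\xi)|^2$. These are exactly the integrands of the first and second left-hand side terms of \eqref{6E2-34}, over the same set $\Omega_1 \cup \Omega^*$ and with the same weight. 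Subtracting the first-order part from the gradient term on the left converts its coefficient $\{[\rho\tau - \frac{1}{8}] - C_{\lambda,e}\}$ into $\{[\rho\tau - \frac{1}{8}] - C_{\lambda,e} - \frac{C_{y_{e},B_e}}{(\rho\tau - 1/8)}\}$; subtracting the zeroth-order part from the second left-hand side term converts its coefficient $\{[2\rho k^2\tau^3 + \calO(\tau^2)] - C_{\lambda,e}\}$ into $\{[2\rho k^2\tau^3 + \calO(\tau^2)] - C_{\lambda,e} - \frac{C_{y_{e},B_e}}{(\rho\tau - 1/8)}\}$. The third left-hand side term, the $|(\chi p)|^2$ integral, is left untouched, and on the right-hand side only the last two integrals over $\Omega^*$ survive. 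This is precisely \eqref{6E2-35}.

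Since the argument is merely bookkeeping, there is no genuine analytic obstacle; the only point worth noting---and the reason for the qualifier \emph{for $\tau$ sufficiently large}---is that the subtracted quantity $\frac{C_{y_{e},B_e}}{(\rho\tau - 1/8)}$ is finite and well-defined as soon as $\rho\tau > \frac{1}{8}$, which holds for all large $\tau$, so no sign issue or singularity arises in the rearrangement itself. (The positivity of the two resulting left-hand side coefficients, which is needed later to absorb the lower-order $\Omega^*$-integrals on the right, follows for large $\tau$ because the leading powers $\rho\tau$ and $2\rho k^2\tau^3$ dominate the fixed constants $C_{\lambda,e}$ and $C_{y_{e},B_e}$; but this positivity is not required for inequality \eqref{6E2-35} as stated.)
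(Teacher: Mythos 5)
Your proposal is correct and coincides with the paper's own argument: Step 14 of the paper obtains \eqref{6E2-35} from \eqref{6E2-34} by exactly the rearrangement you describe, namely moving the first right-hand side integral (split into its $|\nabla(\chi\phi)|^2+|\nabla(\chi\xi)|^2$ and $|(\chi\phi)|^2+|(\chi\xi)|^2$ parts) to the left and absorbing the factor $\frac{C_{y_e,B_e}}{(\rho\tau-1/8)}$ into the two corresponding coefficients. Your remark that only finiteness of that factor (i.e.\ $\rho\tau>\frac{1}{8}$), not positivity of the resulting coefficients, is needed for the inequality as stated is also accurate.
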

\noindent
We note explicitly two critical features of estimate \eqref{6E2-35}: the integral terms on its LHS are over $[\Omega_1 \cup \Omega^*]$; while the integral terms on its RHS are over $\Omega^*$. As already noted, \eqref{6E2-35} is the ultimate estimate regarding the original problem \eqref{2.2a}--\eqref{2.2b}.\\ 

\noindent \textbf{Step 15.} \textit{The choice of weight function $\psi(x)$.}  
We now choose the strictly convex function $\psi(x)$ as follows (Fig. \ref{fig:3} and Fig. \ref{fig:4}, as well as Fig. \ref{fig:2}):
\begin{align}
\psi(x) & \geq  0 \mbox{ on $\Omega_1$ where $\chi \equiv 1$ by \eqref{E4-2-2}, so that $e^{2\tau \psi(x)}\geq 1$ on $\Omega_1$},  \label{6E2-36}
\\[3mm]
\psi(x) & \leq  0 \mbox{ on $\Omega_0 \cup \Omega^*$; where $\chi < 1$, so that $e^{2\tau \psi(x)}\leq 1$ on $\Omega^*$}, \label{6E2-37}
\end{align}
in such a way that $\psi(x)$ has no critical point in $\Omega \setminus \omega$, as required by Theorem \ref{thm4-2-1} ($\psi$ has no critical points on $G=\Omega_1 \cup \Omega^*$): that is, the critical point(s) of $\psi$ will fall on $\omega$, outside the region $G = \Omega_1 \cup \Omega^*$ where we have integrated.

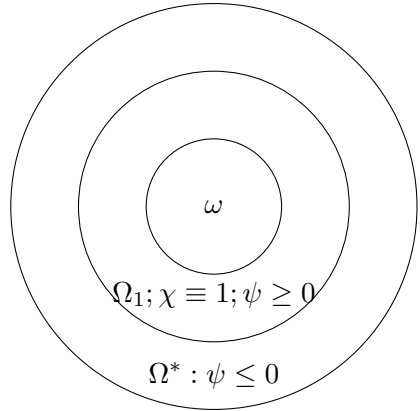
\begin{figure}[H]
	\centering
	\begin{tikzpicture}[scale=0.45]
	\draw (0,0) node {$\omega$} circle [radius = 2];
	\draw (0,0) circle [radius = 4];
	\draw (0,0) circle [radius = 6];
	
	\draw (0,-2.6) node {$\Omega_1; \chi \equiv 1; \psi \geq 0$};
	\draw (0,-5) node {$\Omega^*: \psi \leq 0$};
	\end{tikzpicture}
	\caption{Case 1: construction of $\Omega_1$ and $\Omega^*$}
	\label{fig:3}
\end{figure}

\begin{figure}[H]
	\centering
	\begin{picture}(220,120)
	\put(0,50){\line(1,0){175}}
	\multiput(0,48)(60,0){3}{\line(0,1){5}}
	\put(120,50){\line(0,1){50}}
	\put(120,100){\line(1,0){120}}
	
	\bezier{1000}(0,50.5)(40,50.5)(60,50.5)
	\bezier{1000}(60,50.5)(65,50.5)(70,50.5)
	\bezier{1000}(70,50.5)(94,50.5)(94,80)
	\bezier{1000}(94,80)(95,100)(120,100)
	
	\put(130,20){\vector(-1,0){130}}
	\put(135,17){$\Omega$}
	\put(147,20){\vector(1,0){150}}
	
	\put(193,90){$\chi \equiv 1$}
	
	\put(20,35){\vector(-1,0){19}}
	\put(24,33){$\Omega_0$}
	\put(40,35){\vector(1,0){19}}
	
	\put(81,35){\vector(-1,0){19}}
	\put(84,33){$\Omega^*$}
	\put(99,35){\vector(1,0){18}}
	
	\put(135,35){\vector(-1,0){15}}
	\put(139,33){$\Omega_1$}
	\put(154,35){\vector(1,0){19}}
	
	\put(200,50){\vector(-1,0){22}}
	\put(203,47){$\omega$}
	\put(215,50){\vector(1,0){22}}
	
	{
		\bezier{1000}(-40,-30)(50,-50)(170,109)
		\bezier{1000}(170,109)(212,146)(245,109)
		\bezier{1000}(245,109)(260,80)(260,60)
		\bezier{1000}(260,60)(263,-20)(290,-20)
	}
	
	\put(35,-30){$\psi$}
	\end{picture}
	\vspace*{30pt}
	\caption{Case 1: choice of $\psi$}
	\label{fig:4}
\end{figure}
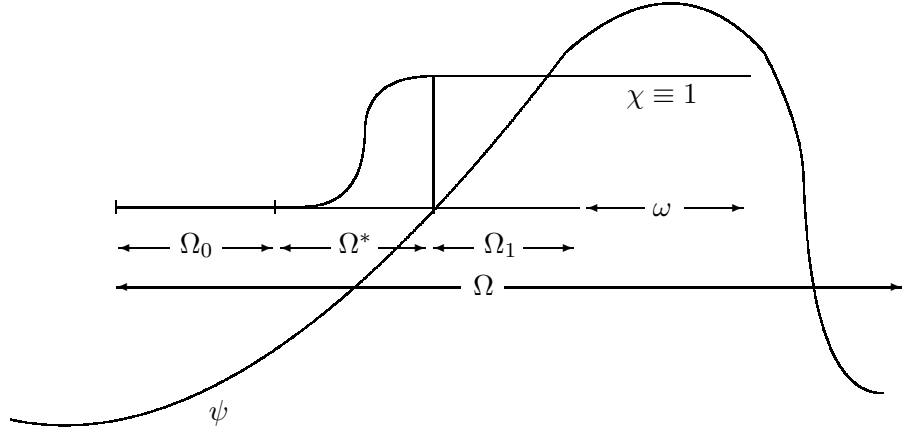

Having chosen $\psi(x)$ as in \eqref{6E2-36}, \eqref{6E2-37} with no critical points in $\Omega \setminus \omega$--i.e., no critical points on $G = \Omega_1 \cup \Omega^*$--we return to the basic estimate \eqref{6E2-35}, with $\tau$ sufficiently large (Fig. \ref{fig:4}). On the LHS of \eqref{6E2-35}, we retain only integration over $\Omega_1$, where $\psi \geq 0$, hence $e^{2\tau \psi} \geq 1$ and $\chi \equiv 1$ by \eqref{E4-2-2}, so that $(\chi u) \equiv u$ on $\Omega_1$, that is, $(\chi \phi) \equiv \phi$ on $\Omega_1$ and $(\chi h) \equiv h$ on $\Omega_1$. On the RHS of \eqref{6E2-35} we have $\psi \leq 0$ on $\Omega^*$, hence $e^{2\tau \psi} \leq 1$ on $\Omega^*$. We thus obtain from \eqref{6E2-35} for $\tau$ sufficiently large
\begin{align}
& \left\{\left[\rho\tau - \, \frac{1}{8}\right] - C_{\lambda, e} -
\frac{C_{B_e,y_{e}}}{\left(\rho\tau- \frac{1}{8}\right)}\right\}
\int_{\Omega_{1}}
\left[|\nabla \phi(x)|^2 + |\nabla \xi(x)|^2\right] dx
\nonumber \\[3mm]
& + \ 
\left\{\left[2\rho k^2\tau^3+ \calO(\tau^2)\right]- C_{\lambda, e}
- \frac{6 C_{B_e,y_{e}}}{\left(\rho\tau- \frac{1}{8}\right)}\right\}
\int_{\Omega_{1}} \left[|\phi(x)|^2 + |\xi(x)|^2 \right]dx     \nonumber \\
& 
+ \ 
\frac{\left[2\rho k^2\tau^3+ \calO(\tau^2)\right]}{\left[\rho\tau - \frac{1}{8}\right]} \int_{\Omega_{1}} |p(x)|^2dx
\nonumber \\[3mm]
\leq & \  \frac{C_\chi}{\left(\rho \tau - \frac{1}{8}\right)}
\int_{\Omega^*}\left[|\nabla p(x)|^2+|p(x)|^2+|\phi(x)|^2 + |\xi(x)|^2\right]dx
\nonumber \\[3mm]
& + \ c_{\chi} \int_{\Omega^*}
\left[ |\nabla \phi(x)|^2 + |\nabla \xi(x)|^2 + |\phi(x)|^2 + |\xi(x)|^2+|p(x)|^2 \right] dx.      \label{6E2-38}
\end{align}
For $\tau$ sufficiently large, inequality \eqref{6E2-38} is of the type
\begin{subequations}
	\label{6E2-39}
	\begin{align}
	&\left(\tau - \mbox{const} - \frac{1}{\tau}\right)
	\int_{\Omega_{1}} \left[|\nabla \phi(x)|^2 + |\nabla \xi(x)|^2\right] dx \nonumber \\
	& + 
	\left(\tau^3 - \mbox{const} - \frac{1}{\tau}\right)
	\int_{\Omega_{1}} \left[|\phi(x)|^2 + |\xi(x)|^2\right] dx + (\tau^2) \int_{\Omega_{1}} |p(x)|^2dx
	\nonumber \\[3mm]
	\leq & \ 
	\frac{c}{\tau} \int_{\Omega^*} \left[|\nabla p(x)|^2+|p(x)|^2+|\phi(x)|^2 + |\xi(x)|^2\right]dx
	\nonumber \\[3mm]
	& + \ \mbox{const} \int_{\Omega^*} \left[ |\nabla \phi(x)|^2 + |\nabla \xi(x)|^2 + |\phi(x)|^2 + |\xi(x)|^2+|p(x)|^2 \right] dx \label{6E2-39a}
	\end{align}
	or setting as usual $u=\{\phi, h\}$, we re-write \eqref{6E2-39a} equivalently as
	\begin{align}
	& \left(\tau - \mbox{const} - \frac{1}{\tau}\right)
	\int_{\Omega_{1}} |\nabla u(x)|^2  dx + 	\left(\tau^3 - \mbox{const} - \frac{1}{\tau}\right)
	\int_{\Omega_{1}} |u(x)|^2 dx + (\tau^2) \int_{\Omega_{1}} |p(x)|^2dx
	\nonumber \\[3mm]
	\leq & \ \frac{c}{\tau} \int_{\Omega^*} \left[|\nabla p(x)|^2+|p(x)|^2 + |u(x)|^2\right]dx
	 + \ \mbox{const} \int_{\Omega^*} \left[|\nabla u(x)|^2 + |u(x)|^2 + |p(x)|^2 \right] dx          \label{6E2-39b}
	\\[3mm]
	\leq & \ \frac{c}{\tau} C_1(p,u;\Omega^*) + \mbox{const } C_2(p,u;\Omega^*).          \label{6E2-39c}
	\end{align}
\end{subequations}
In going from \eqref{6E2-39b} to \eqref{6E2-39c}, we have emphasized in the notation that we are working with a fixed solution $\{u, p\}$ of problem \eqref{2.2a}--\eqref{2.2b}, so that the integrals on the RHS of \eqref{6E2-39b} are fixed numbers $C_1(p,u;\Omega^*)$ and $C_2(p,u;\Omega^*)$, depending on such fixed solution $\{u,p\}$ as well as $\Omega^*$, $u = \{\phi, \xi\}$.  Inequality \eqref{6E2-39} is more than we need. On its LHS, we may drop the $\nabla u$-term over $\Omega_1$; and alternatively either keep only the $u$-term over $\Omega_1$, and divide the remaining inequality across by $(\tau^3-\mbox{const}-\frac{1}{\tau})$ for $\tau$ large; or else keep only the $p$-term over $\Omega_1$ and divide the corresponding inequality across by $\tau^2$. We obtain, respectively,
\begin{eqnarray}
\int_{\Omega_{1}} |u(x)|^2dx
& \leq & \left( \frac{C}{\tau^3} \ \frac{1}{\tau}\right) C_1(p,u;\Omega^*) + \frac{\rm const}{\tau^3} C_2(p,u;\Omega^*) \rightarrow 0,              \label{6E2-40}
\\[3mm]
\int_{\Omega_{1}} |p(x)|^2dx
& \leq & \left( \frac{C}{\tau^2} \ \frac{1}{\tau}\right) C_1(p,u;\Omega^*) + \frac{\rm const}{\tau^2} C_2(p,u;\Omega^*) \rightarrow 0         \label{6E2-41}
\end{eqnarray}
as $\tau \rightarrow + \infty$. We thus obtain
\begin{equation}\label{6E2-42}
u(x) \equiv \{\phi(x), \xi(x)\} \equiv 0 \mbox{ in } \Omega_1; \qquad p(x) \equiv 0  \mbox{ in } \Omega_1.
\end{equation}
and recalling \eqref{2.2b} and Step 1
\begin{equation}    \label{NE2-43}
u(x) \equiv \{\phi(x), \xi(x)\} \equiv 0, \quad
p(x) \equiv 0 \mbox{ in } \omega \cup \Omega_1.
\end{equation}
The implication: Step 1 $\Longrightarrow$ \eqref{NE2-43} is illustrated by Fig. \ref{fig:5}, with $u = \bbm \phi \\ \xi \ebm$.

\begin{figure}[H]
	\centering
	\begin{tikzpicture}[scale=0.4]
	\draw (0,0) circle (2);
	\draw (0,0) circle (6);
	\draw (0,0.5) node {$u \equiv 0$};
	\draw (0,-0.5) node {$p \equiv 0$};
	\draw (2.5,2) node {$\omega$};
	\draw[->] (2.2,1.7) -- (1.2,0.7);
	\draw (-2.5,-2) node {$\Omega_1$};
	\draw[->] (0,-1) .. controls (1,-2) and (0.5,-2.5) .. (-0.3,-3) node[anchor = north] {$u \equiv 0; p \equiv 0$};
	\end{tikzpicture}
	\caption{Case 1: from $\{u,p\} = 0$ on $\omega$ to $\{u,p\} = 0$ on $\Omega_1$}
	\label{fig:5}
\end{figure}
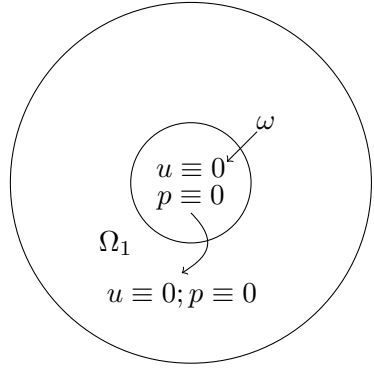

Finally, we can now push the external boundary of $\Omega_1$ as close as we please to the boundary $\partial\Omega$ of $\Omega$, and thus we finally obtain
\begin{equation}
u(x) \equiv \{\phi(x), \xi(x)\} \equiv 0 \mbox{ in } \Omega, \qquad 
p(x) \equiv 0 \mbox{ in } \Omega.
\end{equation}
Indeed, we have $u \equiv \{\phi, \xi \} \in (W^{2,q}(\Omega) \cap W_{0}^{1,q}(\Omega))^d \times (W^{2,q}(\Omega))^d$ and $p \in  W^{1,q}(\Omega)$. Moreover, $W^{1,q}(\Omega) \hookrightarrow C(\overline{\Omega})$ for $q>d$ \cite[p. 78]{SK:1989} as assumed, and more generally $W^{m,q}(\Omega) \hookrightarrow C^k(\overline{\Omega})$ for $qm > d$, $\ds k = m-\frac{d}{q}$ \cite[p. 79]{SK:1989}. A fortiori, $u \in (C(\overline{\Omega}))^d, p \in C(\overline{\Omega})$, $q > d$, as assumed. Thus, if it should happen that $u(x_1) \neq 0$ at a point $x_1 \in \Omega$ near $\partial\Omega$, hence $u(x) \not\equiv 0$ in a suitable neighborhood $N$ of $x_1$, then it would suffice to take $\Omega_1$ as to intersect such $N$ to obtain a contradiction. Theorem \ref{Thm-UCP-1} is proved at least in the Case 1 (Fig. \ref{fig:1}, Fig. \ref{fig:2}, and Fig. \ref{fig:localized_pair}).\\

\nin {\bf Case 2}: Let $\omega$ be a full collar of boundary $\Gamma = \partial\Omega$ (Fig.\ref{fig:6} and Fig.\ref{fig:7}). Then, the above proof of Case 1 can be carried out with sets $\Omega_1$, $\Omega^*$, and $\Omega_0$, as indicated in Fig.\ref{fig:6}.

Let now $\omega$ be a partial collar of the boundary $\Gamma = \partial \Omega$. Then, the above proof of Case 1 can be carried out with sets $\Omega_1$, $\Omega^*$, and $\Omega_0$, as indicated in Fig.\ref{fig:8}.
\hfill $\qedsymbol$

\begin{figure}[H]
	\centering
	\begin{tikzpicture}[scale=0.6]
	\draw (0,0) node {$\Omega_0; \chi \equiv 0$} circle [x radius=2, y radius=1];
	\draw (0,0) circle [x radius=4, y radius=2];
	\draw (0,0) circle [x radius=6, y radius=3];
	\draw (0,0) circle [x radius=8, y radius=4];
	
	\draw \foreach \x in {0,1,2,3}
	{
		(-7.5+0.09*\x, 1.4+0.1*\x) -- (-7.2+0.1*\x, 1.1+0.09*\x) 
	};
	
	\draw \foreach \x in {0,1,2,3}
	{
		(-7.5+0.09*\x, -1.4-0.1*\x) -- (-7.2+0.1*\x, -1.1-0.09*\x) 
	};
	
	\draw[->] (2, 0) -- (1.3, 0) node[anchor = north west] {$\nu$};
	\draw[->] (6, 0) -- (6.7, 0) node[anchor = north east] {$\nu$};
	\draw[thick, ->] (1,-5) node[anchor = west] {$D$} -- (1,-0.85);
	\draw[thick, ->] (1,-5) -- (-3.5,-2.45);
	\draw (-5,0.5) node {$\Omega_1$};
	\draw (-5,-0.5) node {$\chi \equiv 1$};
	\draw (-3,0) node {$\Omega^*$};
	\draw (-7,0.5) node {$\omega$};
	\draw (-7,-0.5) node {$\chi \equiv 1$};
	\end{tikzpicture}
	\caption{Case 2: $\omega$ is a collar of $\Gamma$; $G = \Omega_1 \cup \Omega^*; \ \partial G = D = [\mbox{internal boundary of } \omega] \cup [\mbox{internal boundary of } \Omega^*]$}
	\label{fig:6}
\end{figure}
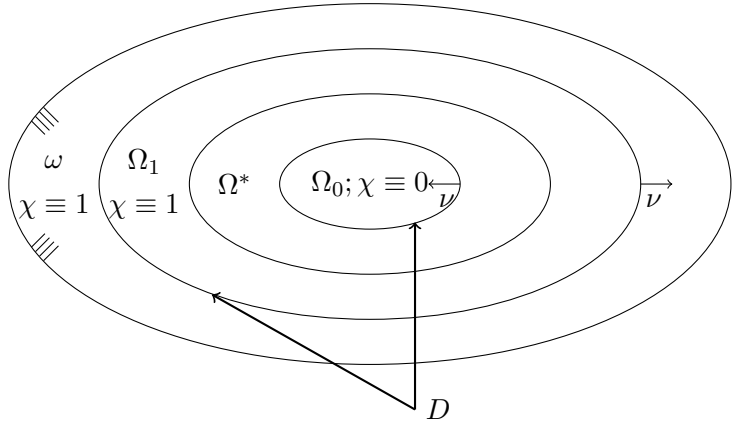
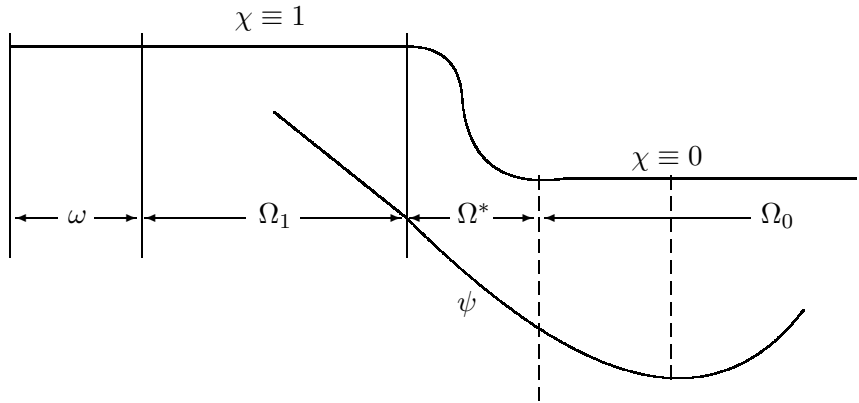
\begin{figure}[H]
	\centering
	\begin{picture}(320,170)
	\put(260,50){\line(1,0){60}}
	\put(0,20){\line(0,1){85}}
	\put(50,20){\line(0,1){85}}
	\put(150,20){\line(0,1){85}}
	\put(0,100){\line(1,0){150}}
	\multiput(200,51)(0,-8){11}{\line(0,-1){5}}
	\multiput(250,51)(0,-8){10}{\line(0,-1){5}}
	
	\bezier{1000}(100,75)(125,55)(150,35)
	\bezier{1000}(150,35)(250,-65)(300,0)
	
	\bezier{1000}(150,100)(170,100)(171,80)
	\bezier{1000}(171,80)(174,45)(210,50)
	\bezier{1000}(210,50)(210,50)(270,50)
	
	\put(85,108){$\chi \equiv 1$}
	\put(235,55){$\chi \equiv 0$}
	
	\put(18,35){\vector(-1,0){16}}
	\put(22,33){$\omega$}
	\put(32,35){\vector(1,0){16}}
	
	\put(89,35){\vector(-1,0){38}}
	\put(94,33){$\Omega_1$}
	\put(110,35){\vector(1,0){38}}
	
	\put(166,35){\vector(-1,0){15}}
	\put(169,33){$\Omega^*$}
	\put(183,35){\vector(1,0){15}}
	
	\put(280,35){\vector(-1,0){78}}
	\put(284,33){$\Omega_0$}
	
	\put(169,0){$\psi$}
	
	\end{picture}
	\vspace*{30pt}
	\caption{Case 2: the choice of $\psi$ for Fig. \ref{fig:6}}
	\label{fig:7}
\end{figure}

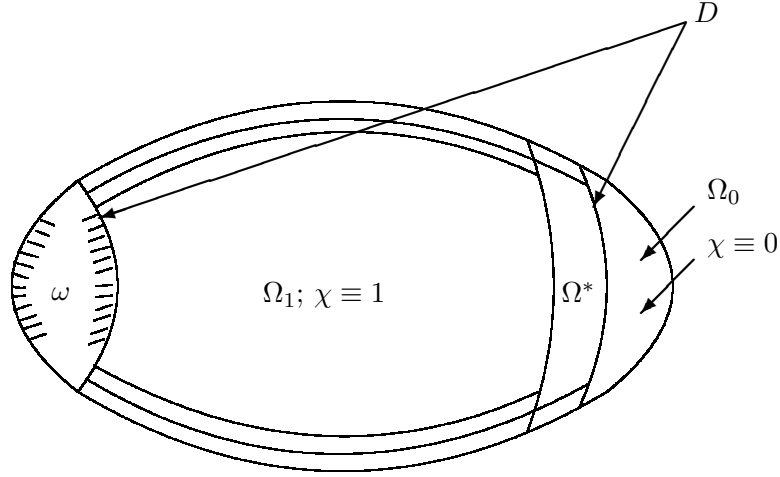
\begin{figure}[H]
	\centering
	\begin{picture}(290,170)
	\bezier{1000}(50,10)(150,-50)(250,10)
	\bezier{1000}(53,15)(146,-41)(243,13)
	\bezier{1000}(56,20)(142,-28)(224,10)
	\bezier{1000}(50,90)(150,150)(250,90)
	\bezier{1000}(53,85)(146,140)(243,88)
	\bezier{1000}(57,80)(142,130)(224,92)
	
	\bezier{1000}(50,10)(0,50)(50,90)
	\bezier{1000}(250,10)(300,50)(250,90)
	\bezier{1000}(50,90)(80,50)(50,10)
	\bezier{1000}(240,95)(260,50)(240,5)
	\bezier{1000}(220,105)(240,50)(220,-5)
	\put(40,45){$\omega$}
	\put(120,44){$\Omega_1$; $\chi\equiv 1$}
	\put(233,44){$\Omega^*$}
	\put(288,83){$\Omega_0$}
	\put(288,63){$\chi\equiv 0$}
	\put(283,80){\vector(-1,-1){20}}
	\put(283,60){\vector(-1,-1){20}}
	\put(283,150){$D$}
	\thicklines
	\put(280,150){\vector(-1,-2){35}}
	\put(280,150){\vector(-3,-1){222}}
	\put(283,80){\vector(-1,-1){20}}
	\put(283,60){\vector(-1,-1){20}}
	\thinlines
	\bezier{40}(36,75)(39,74)(41,73)
	\bezier{40}(34,71)(37,70)(39,69)
	\bezier{40}(31,67)(34,66)(37,65)
	\bezier{40}(28,63)(31,62)(35,61)
	\bezier{40}(26,59)(30,58)(33,57)
	\bezier{40}(25,55)(28,54)(31,53)
	\bezier{40}(25,50)(27,49.25)(30,49.5)
	\bezier{40}(25,46)(27,46.25)(30,46.5)
	\bezier{40}(26,41)(29,42)(32,43)
	\bezier{40}(28,37)(31,38)(34,39)
	\bezier{40}(30,34)(33,35)(36,36)
	\bezier{40}(32,30)(35,31)(38,32)
	
	\bezier{40}(52,75)(55,76)(57,77)
	\bezier{40}(54,71)(57,72)(59,73)
	\bezier{40}(54,67)(57,68)(60,69)
	\bezier{40}(55,63)(58,64)(62,65)
	\bezier{40}(56,59)(59,60)(62,61)
	\bezier{40}(57,55)(60,56)(63,57)
	\bezier{40}(57,50)(60,50.25)(63,50.5)
	\bezier{40}(57,46)(60,46.25)(63,46.5)
	\bezier{40}(57,41)(60,42)(63,43)
	\bezier{40}(56,37)(59,38)(62,39)
	\bezier{40}(55,32)(58,33)(61,34)
	\bezier{40}(54,28)(57,29)(60,30)
	\end{picture}
	\vspace*{30pt}
	\caption{Case 2: $\omega$ is a collar of a portion of the boundary}
	\label{fig:8}
\end{figure}

\section{Implication of Theorem \ref{Thm-UCP-1} on the solution of the corresponding uniform stabilization problem of the MHD by finite dimensional interior localized static feedback controllers.}\label{Sec-3}

As is by now well known \cite{RT:2024}, a result such as Theorem \ref{Thm-UCP-1} is the ``ignition key" to solve a corresponding stabilization problem. Because of space constraints, we can only report here, in a very concise form, the direct implication of Theorem \ref{Thm-UCP-1} in the establishing (in the adjoint version) the Kalman algebraic condition for the finite dimensional unstable component of the linearized dynamics (\hyperref[1.4]{1.4a-e}). Solution of the full local uniform stabilization problem of the original nonlinear problem (\hyperref[1.1]{1.1a-e}) is the Besov space setting, by means of two finite dimensional localized static controllers $\{ u,v \}$ in feedback form and of minimal dimension is given in \cite[$d = 2,3$]{LPT.6}.\\

\nin \textbf{Preliminaries} We preliminarily assume that the space $\bs{L}^p(\Omega)$ can be decomposed into the direct (non orthonormal sum for $q \neq 2$)
\begin{subequations}\label{3.1}
	\begin{align}
	\bs{L}^q(\Omega) &= \Lso \oplus \bs{G}^q(\Omega),\\
	\Lso &= \overline{\{ \bs{y} \in \bs{C}_c^{\infty}(\Omega): \div \ \bs{y} = 0 \text{ in } \Omega \}}^{\norm{\cdot}_q} \nonumber\\
	&= \{\bs{g} \in \bs{L}^q(\Omega): \div \ \bs{g} = 0; \  \bs{g} \cdot \nu = 0 \text{ on } \partial \Omega \}, \nonumber\\
	& \hspace{3cm} \text{ for any locally Lipschitz domain } \Omega \subset \mathbb{R}^d, d \geq 2 \\
	\bs{G}^q(\Omega) &= \{\bs{y} \in \bs{L}^q(\Omega): \bs{y} = \nabla p, \ p \in W_{loc}^{1,q}(\Omega) \} \ \text{where } 1 \leq q < \infty.
	\end{align}
\end{subequations}
\nin of the solenoidal vector space $\Lso$ and the space of gradient fields (Helmholtz decomposition). Both of these are closed subspaces of $\bs{L}^q$. This is a very weak assumption in the bounded domain $\Omega$ in $\BR^d, \ d = 2,3$. Let $P_q$ be the Helmholtz projection, the unique linear, bounded, idempotent ($P_q^2 = P_q$) projection operator $P_q: \bs{L}^q(\Omega)$ onto $\Lso$, having $\bs{G}^q(\Omega)$ as its null space. Introduce the operators
\begin{align}
A_{1,q} w &= -P_q \Delta w, \quad
\mathcal{D}(A_{1,q}) = \bs{W}^{2,q}(\Omega) \cap \bs{W}^{1,q}_0(\Omega) \cap \Lso, \label{3.2}\\
A_{2,q} \BW &= -\Delta \BW, \quad
\mathcal{D}(A_{2,q}) = \left\{ \BW \in \bs{W}^{2,q}(\Omega) \cap \Lso, \ (\curl \ \BW) \times n \equiv 0 \text{ on } \Gamma \right\}, \label{3.3}\\
A_{o,y_e,q} w &= P_q \calL^+_{y_e} w = P_q \left[ (y_e \cdot \nabla) w + (w \cdot \nabla)y_e \right], \quad \calD(A_{o,y_e,q}) = \calD(A_{1,q}^{\rfrac{1}{2}}) \subset \Lso, \label{3.4}\\
A_{o,B_e,q} \BW &= P_q \calL^+_{B_e}\BW = P_q \left[ (B_e \cdot \nabla) \BW + (\BW \cdot \nabla)B_e \right], \quad \calD(A_{o,B_e,q}) = \calD(A_{2,q}^{\rfrac{1}{2}}) \subset \Lso, \label{3.5}\\
L^-_{B_e}w &= \calL^-_{B_e}w = \left[ (B_e \cdot \nabla)w - (w \cdot \nabla)B_e  \right], \quad \calD(L^-_{B_e}) = \calD(A_{2,q}^{\rfrac{1}{2}}) \subset \Lso, \label{3.6}\\
L^-_{y_e}\BW &= \calL^-_{y_e} \BW = (y_e \cdot \nabla) \BW - (\BW \cdot \nabla)y_e, \quad \calD(L^-_{y_e}) = \calD(A_{2,q}^{\rfrac{1}{2}}) \subset \Lso. \label{3.7}
\end{align}
Invoking \eqref{3.5}, \eqref{3.6}, we rewrite (\hyperref[1.4]{1.4a-b}) more conveniently as
\begin{empheq}[left=\empheqlbrace]{align}
w_t - \nu \Delta w + \calL^+_{y_e}(w) - \calL^+_{B_e}(\mathbb{W}) + \nabla p &= mu &\text{ in } Q, \label{3.8}\\
\mathbb{W}_t - \eta \Delta \mathbb{W} + \calL^-_{y_e}(\mathbb{W}) + \calL^-_{B_e}(w) &= mv &\text{ in } Q, \label{3.9}
\end{empheq}
to be accompanied by the divergence free contribution \eqref{1.4c} and the B.C. \eqref{1.4d}. We next invoke the Helmholtz projection $P_q: \LpO$ onto $\Lso$ to eliminate the pressure term $\nabla p$ in \eqref{1.9a}, taking advantage of the divergence free conditions $\div \ w \equiv 0$, $\div \ \mathbb{W} \equiv 0$ in $Q$, and the conditions $w \equiv 0$ and $\mathbb{W}\cdot n \equiv 0$ on $\Sigma$, which are intrinsic conditions in the $\Lso$-space. We obtain

\begin{equation}\label{3.10}
w_t - \nu( P_q\Delta) w + (P_q\calL^+_{y_e})(w) + (P_q\calL^+_{B_e})(\mathbb{W}) = mP_qu \quad \text{ in } Q
\end{equation}
as $P_q\nabla p \equiv 0$, which along with Eq \eqref{3.9} for $\mathbb{W}$ yields the following first order PDE system\\
\begin{equation}\label{3.11}
\frac{d}{dt} \bbm w \\[1mm] \BW \ebm = \bbm \nu P_q\Delta & 0 \\[1mm] 0 & \eta \Delta \ebm \bbm w \\[1mm] \BW \ebm + \bbm -P_q \calL^+_{y_e} & P_q \calL^+_{B_e} \\[1mm] -\calL^-_{B_e} & -\calL^-_{y_e} \ebm \bbm w \\ \BW \ebm + \bbm mP_qu \\[1mm] mP_qv \ebm, 
\end{equation}
\nin along with (\hyperref[1.4]{1.4c-d}). Thus, in view of \eqref{3.2}-\eqref{3.5}, the abstract version of the PDE-coupled problem (\hyperref[1.4]{1.14a-d}) is
\begin{equation}\label{3.12}
\frac{d}{dt} \bbm w \\[1mm] \BW \ebm = \bbm -\nu A_{1,q} & 0 \\[1mm] 0 & -\eta A_{2,q} \ebm \bbm w \\[1mm] \BW \ebm + \bbm -A_{o,y_e,q} & A_{o,B_e,q} \\[1mm] L^-_{B_e} & -L^-_{y_e} \ebm \bbm w \\[1mm] \BW \ebm + \bbm mP_qu \\[1mm] mP_qv \ebm,
\end{equation}
finally 
\begin{equation}\label{3.13}
\frac{d}{dt} \bbm w \\ \BW \ebm = \wti \BA_q \bbm w \\ \BW \ebm + \bbm mP_qu \\ mP_qv \ebm, \quad \bos{\eta} = \bbm w \\ \BW \ebm, \quad \text{ on } \Yqs = \Lso \times \Lso.
\end{equation}
\begin{multline}\label{3.14}
\wti \BA_q = \BA_{o,q} + \Pi = \bbm -\nu A_{1,q} & 0 \\ 0 & -\eta A_{2,q} \ebm + \bbm -A_{o,y_e,q} & A_{o,B_e,q} \\ L^-_{B_e} & -L^-_{y_e} \ebm\\
\Lso \times \Lso \supset \calD(\wti \BA_q) = \calD(A_{1,q}) \times \calD(A_{2,q}) \to \bs{Y}^q_{\sigma}(\Omega)
\end{multline}
The operator $\wti \BA_q$ is the generator a s.c., analytic semigroup $e^{\wti \BA_qt}$ on $\Lso \times \Lso \equiv \bs{Y}^q_{\sigma}(\Omega)$, \cite{LPT.6}.

\subsection{Introduction to the stabilization problem}	

\nin For the problem of stabilization to be relevant, the assumption is that: the generator $\wti \BA_q$ of a s.c. analytic compact semigroup is \uline{unstable} on $\Lso \times \Lso \equiv \bs{Y}^q_{\sigma}(\Omega)$, in the sense that there are $N$ unstable eigenvalues $\lambda_1, \lambda_2, \dots, \lambda_N$ of $\wti \BA_q$

\begin{figure}[!ht]
	\centering
	\begin{tikzpicture}[x=10pt,y=10pt,>=stealth, scale=.6]
	\draw  (-15,0) -- (15,0);
	\draw  (0,-10) -- (0,10);
	\draw[thick]  (10,0) -- (-10,7);
	\draw[thick]  (10,0) -- (-10,-7);
	\draw (-3,1.6) node {$\bullet$};
	\draw (-6,2) node {$\lambda_{N+1}$};
	\draw (1.5,1.3) node {$\bullet$};
	\draw (4.5,0.8) node {$\bullet$};
	\draw (8,5.5) node {$\lambda_{1}$};
	\draw (4.5,6) node {$\lambda_{N}$};
	\draw[->]  (7,5) -- (4.7,1.1);
	\draw[->]  (4,5.5) -- (1.8,1.7);
	\end{tikzpicture}
	\caption{The eigenvalues of $\wti \BA_q$}
	\label{fig:9}
\end{figure}
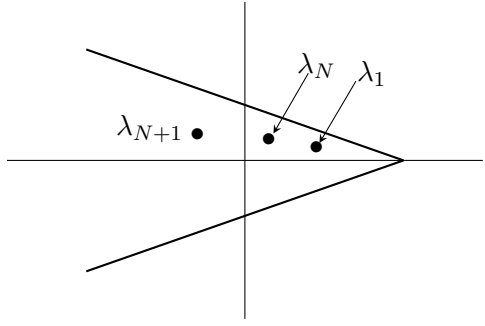
\begin{equation*}
\ldots \leq Re~ \lambda_{N+2} \leq Re~ \lambda_{N+1} < 0 \leq Re~\lambda_N \leq  \ldots \leq Re~\lambda_2 \leq Re~\lambda_1
\end{equation*}
\nin where the eigenvalues of $\wti \BA_q$ are numbered in order of decreasing real parts. Let $M$ be the number of distinct unstable eigenvalues of $\wti \BA_q$ (or $\wti \BA_q^*$). For each $i = 1, \dots, M$, we denote by  $\ds \{\boldsymbol{\Phi}_{ij}\}_{j=1}^{\ell_i} = \left\{  \bbm \varphi_{ij} \\ \psi_{ij} \ebm \right\}_{j = 1}^{\ell_i}, \{\boldsymbol{\Phi}^*_{ij}\}_{j=1}^{\ell_i} = \left\{ \bbm \varphi^*_{ij} \\ \psi^*_{ij} \ebm  \right\}_{j = 1}^{\ell_i}$ the normalized, linearly independent eigenfunctions of $\wti \BA_q$, respectively $\wti \BA_q^*$, say, on
\begin{multline}\label{3.15}
\bs{Y}^q_{\sigma}(\Omega) \equiv \Lso \times \Lso
\mbox{ and } \\
(\bs{Y}^q_{\sigma}(\Omega))^* \equiv (\Lso)' \times (\Lso)' = \bs{L}^{q'}_{\sigma}(\Omega) \times \bs{L}^{q'}_{\sigma}(\Omega), \quad
\frac{1}{q} + \frac{1}{q'} = 1,
\end{multline}
corresponding to the $M$ distinct unstable eigenvalues $\lambda_1, \ldots, \lambda_M$ of $\wti \BA_q$ and $\overline{\lambda}_1, \ldots, \overline{\lambda}_M$ of $\wti \BA_q^*$ respectively,:
\begin{align}
\wti \BA_q \boldsymbol{\Phi}_{ij} &= \lambda_i \boldsymbol{\Phi}_{ij} \in \calD(\wti \BA_q) = \calD(A_{1,q}) \times \calD(A_{2,q}) \nonumber\\
& =  [\bs{W}^{2,q}(\Omega) \cap \bs{W}^{1,q}_0(\Omega) \cap \Lso] \times [\bs{W}^{2,q}(\Omega) \cap \Lso]  \label{3.16}\\
\wti \BA_q^* \boldsymbol{\Phi}_{ij}^* &= \bar{\lambda}_i \boldsymbol{\Phi}_{ij}^* \in \calD(\wti \BA_q^*) = [\bs{W}^{2,q'}(\Omega) \cap \bs{W}^{1,q'}_0(\Omega) \cap \bs{L}^{q'}_{\sigma}(\Omega)] \times [\bs{W}^{2,q'}(\Omega) \cap \bs{L}^{q'}_{\sigma}(\Omega)]. \label{3.17}
\end{align}
\begin{equation*}      
\begin{tikzpicture}
\draw (0,0) node {$\bar{\lambda}_i$};
\draw (-0.4,-0.4) -- (-1,-1) node[anchor = north east] {$\bs{\Phi}^*_{i1}$,};
\draw (-0.2,-0.4) -- (-0.4,-1) node[anchor = north] {$\bs{\Phi}^*_{i2},$};
\draw (0.4,-1.3) node {$\ldots$};
\draw (0.4,-0.4) -- (1,-1) node[anchor = north west] {$\bs{\Phi}^*_{i \ell_i}$ };
\draw (5.4,-1.1) node {\text{ Linearly independent in } $\Yqss$,};
\draw (5.6,-1.6) node {\text{ and } $\ell_i$ \text{ is the geometric multiplicity of } $\lambda_i$.}; 
\end{tikzpicture}
\end{equation*}
\nin The critical consequence of Theorem \ref{1.2} (actually, its adjoint version whose proof is essentially the same) is

\begin{thm}\label{I-Thm-3.1}
	\begin{enumerate}[(i)]
		\item With reference to \eqref{3.17}, we have for each $i$, 
		\begin{multline}\label{3.18}
		\text{the vectors } \bs{\Phi}_{i1}^*, \dots, \bs{\Phi}_{i\ell_i}^* \text{ remain linearly independent in } \\ \bs{L}^{q'}_{\sigma}(\omega) \times \bs{L}^{q'}_{\sigma}(\omega) \equiv \Yqss = \Yps, \ \frac{1}{q} + \frac{1}{q'} = 1.
		\end{multline}
		\item Consequently. It is possible to select vectors $\bs{u}_1,\dots,\bs{u}_K \in \Ls(\omega) \times \Ls(\omega),\ \bs{u}_i = [u_i^1, u^2_i], \ q > 1,\ K = \sup \ \{\ell_i:i = 1,\dots,M \}$, such that 
		\begin{subequations}\label{3.19}
			\begin{align}
			\text{ rank}
			\begin{bmatrix}
			(\bs{u}_1,\bs{\Phi}_{i1}^*)_{\omega} & \dots & (\bs{u}_K,\bs{\Phi}_{i1}^*)_{\omega} \\[1mm]
			(\bs{u}_1,\bs{\Phi}_{i2}^*)_{\omega} & \dots & (\bs{u}_K,\bs{\Phi}_{i2}^*)_{\omega} \\[1mm]
			\vdots &  & \vdots\\[1mm]
			(\bs{u}_1,\bs{\Phi}_{i \ell_i}^*)_{\omega}& \dots & (\bs{u}_K,\bs{\Phi}_{i \ell_i}^*)_{\omega} \\
			\end{bmatrix}
			&= \ell_i; \ \ell_i \times K \ \text{for each } i = 1,\dots,M, \label{3.19a}\\
			\left(\bs{u}_j, \bs{\Phi}_{i1}^* \right)_{\omega} = \left( \bbm u^1_j \\[1mm] u^2_j \ebm, \bbm \varphi_{ij}^* \\[1mm] \psi_{ij}^* \ebm \right)_{\bs{L}^q_{\sigma}(\omega) \times \bs{L}^q_{\sigma}(\omega)}, \ \ell_i &= \text{ geometric multiplicity of } \lambda_i. \label{3.19b}
			\end{align}
		\end{subequations}
	\end{enumerate}
\end{thm}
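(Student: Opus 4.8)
The plan is to obtain part (i) as a direct corollary of the \emph{adjoint} version of Theorem~\ref{Thm-UCP-1}, and then to derive part (ii) from (i) by a surjectivity-plus-genericity argument. For part (i), fix $i$ and suppose that some linear combination $\sum_{j=1}^{\ell_i} c_j \bs{\Phi}_{ij}^*$ vanishes in $\bs{L}^{q'}_{\sigma}(\omega)\times\bs{L}^{q'}_{\sigma}(\omega)$. Put $\bs{\Psi}=\sum_{j=1}^{\ell_i} c_j \bs{\Phi}_{ij}^*$. Since each $\bs{\Phi}_{ij}^*$ is an eigenfunction of $\wti\BA_q^*$ associated with $\bar\lambda_i$, so is $\bs{\Psi}$, i.e. $\wti\BA_q^*\bs{\Psi}=\bar\lambda_i\bs{\Psi}$, while by construction $\bs{\Psi}\equiv 0$ in $\omega$. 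Undoing the Helmholtz projection (which reintroduces a pressure $p^*$), the pair $\{\bs{\Psi},p^*\}$ solves the adjoint eigenvalue problem---the structural analogue of \eqref{1.5a'}--\eqref{1.5d}---subject to the over-determination $\bs{\Psi}\equiv 0$ in $\omega$ of \eqref{1.6}. The adjoint UCP then forces $\bs{\Psi}\equiv 0$ in $\Omega$. As the $\bs{\Phi}_{ij}^*$, $j=1,\dots,\ell_i$, are by hypothesis linearly independent in the full space $\bs{L}^{q'}_{\sigma}(\Omega)\times\bs{L}^{q'}_{\sigma}(\Omega)$, all the $c_j$ vanish, which is exactly \eqref{3.18}.

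The crux is therefore to justify the adjoint UCP just invoked, and I would do so by checking that passing to $\wti\BA_q^*$ preserves every structural feature exploited in the proof of Theorem~\ref{Thm-UCP-1}. Computing $\wti\BA_q^*$ explicitly, the adjoints of the Oseen operators $\calL^+_{y_e},\calL^+_{B_e}$ and of the coupling operators $\calL^-_{y_e},\calL^-_{B_e}$ are again first-order differential operators whose coefficients are assembled from $y_e,B_e$ and their first derivatives; these stay bounded and continuous because $y_e,B_e\in(W^{2,q}(\Omega))^d\hookrightarrow(C^1(\overline\Omega))^d$ for $q>d$. Hence the adjoint eigenvalue problem retains the component-wise Laplacian principal part, first-order lower-order terms with $C^0$ coefficients, the divergence-free constraints $\div\varphi^*=\div\psi^*=0$, and homogeneous boundary conditions of the same type as \eqref{1.5d}. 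In particular, taking ``$\div$'' of the adjoint fluid equation and using $\div\varphi^*=0$ together with $\div y_e=\div B_e=0$ reproduces the reduction \eqref{6E2-24b}--\eqref{6E2-24c}, so that $p^*$ again satisfies an elliptic equation whose right-hand side is only first order. Consequently the pointwise Carleman estimate of Theorem~\ref{thm4-2-1} applies to $\{\bs{\Psi},p^*\}$, and the cut-off/weight construction of Steps~2--15 transfers verbatim to yield $\bs{\Psi}\equiv 0$ and $p^*\equiv\text{const}$ in $\Omega$.

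For part (ii), the linear independence \eqref{3.18} of $\{\bs{\Phi}_{ij}^*\}_{j=1}^{\ell_i}$ in $\bs{L}^{q'}_{\sigma}(\omega)\times\bs{L}^{q'}_{\sigma}(\omega)$ says precisely that, for each fixed $i$, the linear map $\bs{u}\mapsto\big((\bs{u},\bs{\Phi}_{ij}^*)_{\omega}\big)_{j=1}^{\ell_i}$ from $\Ls(\omega)\times\Ls(\omega)$ into $\mathbb{C}^{\ell_i}$ is surjective; hence there exist $\ell_i$ vectors rendering the $\ell_i\times\ell_i$ pairing matrix nonsingular. To obtain a single family $\bs{u}_1,\dots,\bs{u}_K$, $K=\sup_i\ell_i$, serving all $i$ simultaneously, I would argue by genericity: for each $i$ the set of $K$-tuples $(\bs{u}_1,\dots,\bs{u}_K)$ making the $\ell_i\times K$ matrix of \eqref{3.19a} drop rank is the common zero set of its $\ell_i\times\ell_i$ minors---continuous multilinear functionals that are not all identically zero, by the per-$i$ solvability just shown---hence closed with dense complement. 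Choosing $(\bs{u}_1,\dots,\bs{u}_K)$ in the nonempty finite intersection of these $M$ open dense sets yields the rank condition \eqref{3.19a} for every $i$, establishing (ii).

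The main obstacle I anticipate is the second step: carefully confirming that the adjoint system keeps the Oseen/coupling-plus-Laplacian form, the solenoidal constraints, and---most delicately---boundary conditions compatible with the vanishing of the Carleman boundary term \eqref{E4-2-8}, together with enough regularity of the adjoint eigenfunctions (obtained from elliptic regularity for the system with $C^1$ coefficients, and in Case~2 up to $\Gamma$) to legitimately feed them into the $H^2$ pointwise estimate \eqref{E4-2-7}.
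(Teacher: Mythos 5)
Your part (i) is exactly the paper's proof: the same contradiction argument (a linear combination of the $\bs{\Phi}_{ij}^*$ that vanishes a.e.\ on $\omega$ is itself an eigenfunction of $\wti{\BA}_q^*$, hence solves the over-determined adjoint eigenproblem, hence vanishes on all of $\Omega$ by the adjoint UCP, contradicting linear independence on $\Omega$), and your structural verification of the adjoint system --- first-order coefficients from $y_e, B_e \in (W^{2,q}(\Omega))^d \hookrightarrow (C^1(\overline{\Omega}))^d$, preserved solenoidal constraints and boundary conditions, the ``div'' reduction for the pressure, then the Carleman machinery --- is precisely what the paper compresses into the displays \eqref{3.23}--\eqref{3.24} and the phrase ``essentially with the same proof.'' So (i) is correct and matches the paper.

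For part (ii) the paper gives no argument at all (it is asserted with ``Consequently''), so you are on your own there. Your genericity reduction is sound: the rank-deficient $K$-tuples form the common zero set of finitely many polynomial minors, a closed set with dense complement once per-$i$ solvability is known, and a finite intersection of open dense sets is nonempty. The genuine gap is in the surjectivity step that feeds it. What the UCP argument of part (i) delivers is that no nontrivial combination $\sum_j c_j \bs{\Phi}_{ij}^*$ vanishes \emph{pointwise a.e.} on $\omega$; what surjectivity of $\bs{u}\mapsto\big((\bs{u},\bs{\Phi}_{ij}^*)_{\omega}\big)_j$ on $\Ls(\omega)\times\Ls(\omega)$ requires is that no nontrivial combination annihilates the solenoidal test space. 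These are not the same statement: by de Rham/Helmholtz (cf.\ \eqref{3.1}), the annihilator of $\Ls(\omega)$ under the $L^q$--$L^{q'}$ pairing is not $\{0\}$ but the space of gradient fields $\bs{G}^{q'}(\omega)$, so your map fails to be surjective exactly when some nontrivial combination restricts on $\omega$ to a (possibly nonzero) gradient $\nabla\pi$. The UCP with over-determination ``$\equiv 0$ in $\omega$'' does not exclude this scenario: Step 1 of the paper's proof uses pointwise vanishing on $\omega$ to produce the zero Cauchy data \eqref{E4-2-1} on $\partial\omega$, and that step collapses if one only knows $\bs{\Phi}^*=\nabla\pi$ in $\omega$. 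To close (ii) you would need either (a) a strengthened UCP with over-determination ``$\phi=\nabla\pi_1$, $\xi=\nabla\pi_2$ in $\omega$'' (for constant-coefficient Stokes one can get this by propagating curl-freeness through the vorticity equation, but nothing of the sort is proved here), or (b) to take the test vectors $\bs{u}_j$ in all of $\bs{L}^q(\omega)\times\bs{L}^q(\omega)$ rather than in $\Ls(\omega)\times\Ls(\omega)$ --- then the annihilator is trivial and surjectivity is an immediate consequence of (i) --- at the price of modifying the statement of (ii) and checking compatibility with how the input operator $m P_q$ pairs against the solenoidal adjoint eigenfunctions in \eqref{3.13}.
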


\begin{proof}
	\begin{enumerate}[(i)]
		\item By contradiction, let us assume that the vectors $\{\bs{\Phi}_{i1}^*, \dots, \bs{\Phi}_{i \ell_i}^*\}$ are instead linearly dependent on $\bs{L}^{q'}_{\sigma}(\omega) \times \bs{L}^{q'}_{\sigma}(\omega)$ , so that 		
		\begin{equation}\label{3.20}
		\bs{\Phi}_{i \ell_i}^* = \sum_{j = 1}^{\ell_i - 1} \alpha_j \bs{\Phi}_{i \ell_j}^* \text{ in } \Lqpso \times \Lqpso.
		\end{equation}
		Define the following function (depending on $i$) in $\Lo{q'} \times \Lo{q'}$		
		\begin{equation}\label{3.21}
		\bs{\Phi}^* = \left[ \sum_{j=1}^{\ell_i - 1} \alpha_j \bs{\Phi}_{i \ell_j}^* - \bs{\Phi}_{i \ell_i}^* \right] \in \Lo{q'} \times \Lo{q'}, \quad i = 1,\dots,M.
		\end{equation}
		\nin so that $\bs{\Phi}^* \equiv 0$ in $\omega$ by \eqref{3.20}. As each $\bs{\Phi}_{ij}^*$ is an eigenvalue of $\wti \BA_q^*$ (or $(\wti \BA_{q,N}^u)^*$) corresponding to the eigenvalue $\bar{\lambda}_i$, see \eqref{3.17}, so is the linear combination $\bs{\Phi}^*$. This property, along with $\bs{\Phi}^* \equiv 0$ in $\omega$ yields that $\bs{\Phi}^*$ satisfies the following overdetermined eigenvalue problem for the operator $\wti \BA^*_q$ (or $(\wti \BA_{q,N}^u)^*$),		
		\begin{equation}\label{3.22}
		\wti \BA_q^* \bs{\Phi}^* = \bar{\lambda} \bs{\Phi}^*, \text{ div } \bs{\Phi}^* = 0 \text{ in } \Omega; \quad \bs{\Phi}^* = 0 \text{ in } \omega, \text{ by \eqref{3.20}}.
		\end{equation}		
		\nin But the linear combination $\bs{\Phi}^*$ in \eqref{3.25} of the eigenfunctions $\bs{\Phi}_{ij}^* \in \calD(\wti \BA_q^*)$ satisfies itself the Dirichlet B.C $\ds \left. \bs{\Phi}^* \right \rvert_{\partial \Omega} = 0$. Thus the explicit PDE version of problem \eqref{3.22} with $\bs{\Phi}^* = \{ \varphi^*, \xi^* \}$ is
		\begin{subequations}\label{3.23}
			\begin{empheq}[left=\empheqlbrace]{align}
			- \nu \Delta \varphi^* + \calL^*_1 \varphi^* - \calL^*_2 \xi^* + \nabla p &= \bar \lambda \varphi^* & \text{ in } \Omega, \label{3.27a}\\
			-\eta \Delta \xi^* + \calM^*_1 \xi^* - \calM^*_2 \varphi^* &= \bar \lambda \xi^* & \text{ in } \Omega, \label{3.27b}\\
			\div \ \varphi^* \equiv 0, \quad \div \ \xi^* &\equiv 0 & \text{ in } \Omega,\\
			\varphi^* \equiv 0, \quad \xi^* \cdot n \equiv 0, \quad \curl \ \xi^* \times n &\equiv 0 & \text{ on } \Gamma,\\
			\varphi^* \equiv 0, \quad \xi^* &\equiv 0 &\text{ in } \omega, \label{3.23e}
			\end{empheq}
		\end{subequations}
		\begin{equation}\label{3.24}
		\bs{\Phi}^* \in \calD(\wti \BA_{q}^*); \ \calL_i^*\varphi^* = (y_e \cdot \nabla) \varphi^* + (\varphi^* \cdot \nabla)^* y_e,
		\end{equation}
		\nin with overdetermined conditions \eqref{3.23e}, where $(f.\nabla)^*y_e$ is a $d$-vector whose $i$\textsuperscript{th} component is $\displaystyle \sum_{j=1}^{d} (D_i y_{e_j})f_j$. The adjoint version of Theorem \ref{Thm-UCP-1} (essentially with the same proof) implies
		\begin{equation}\label{3.25}
		\varphi^* \equiv 0, \quad \xi^* \equiv 0, \quad p^* \equiv const \text{ in } \Omega; \text{ or } \bs{\Phi}^* = 0 \text{ in } \Lso \times \Lso.
		\end{equation}
		\begin{equation}\label{3.26}
		\text{ that is by } \eqref{3.21} \ \bs{\Phi}_{i \ell_i}^* = \alpha_1 \bs{\Phi}_{i1}^* + \alpha_2 \bs{\Phi}_{i2}^* + \dots + \alpha_{\ell_i - 1}\bs{\Phi}_{i \ell_i - 1}^* \text{ in } \Lo{q'} \times \Lo{q'},
		\end{equation}
		\noindent i.e. the set $\{ \bs{\Phi}_{i1}^*, \dots, \bs{\Phi}_{i \ell_i}^* \}$ in linearly dependent on $\Lo{q'} \times \Lo{q'}$. But this is false, by the very selection of such eigenvectors, see \eqref{3.26} and statement preceding it. Thus, the condition \eqref{3.20} cannot hold.
	\end{enumerate}
	\begin{rmk}
		Condition \eqref{3.19} is the Kalman algebraic condition for asserting controllability of the finite dimensional unstable component $\bos{\eta}_N$ of the $\ds \bbm w \\ \BW \ebm = \bos{\eta}$-dynamics, $\ds \bos{\eta} = \bos{\eta}_N + \bos{\zeta}_N$. It is then equivalent to the arbitrary spectrum location property \cite{BM:1992} and hence it implies that such originally unstable finite dimensional dynamics can be stabilized with an arbitrary large decay rate by a (finite dimensional) state feedback control.
	\end{rmk}
\end{proof}

\section*{Declarations}

\subsection*{Funding}
The research of I. L. was partially supported by  NSF Grant DMS-2205508 and by the NCN, grant Opus, Agreement UMO-2023/49/B/STI/04261. The research of R. T. was partially supported by NSF Grant DMS-2205508. The research of B. P. was partially supported by YSF offered by University of Konstanz under the project number: FP 638/23.

\subsection*{Competing Interests}
The authors have no competing interests to declare that are relevant to the content of this article.

\section*{Data availability statement}
This study did not involve the use of any datasets.


\begin{thebibliography}{99}
	
	\bibitem[Adams]{A:1975}
	\newblock R. A. Adams,
	\newblock Sobolev Spaces.
	\newblock {\textit{Academic Press}, 1975. pp268.}
	
	\bibitem[Ama.1]{HA:1995}
	\newblock H. Amann,
	\newblock{Linear and Quasilinear Parabolic Problems}.
	\newblock {\textit{Birkh\"{a}user}, 1995}.
	
	\bibitem[AW]{AW:2022}
	\newblock C. Amrouche, E. Wehbe, MHD flow with Navier type boundary conditions, \textit{	Applied Mathematics Letters}, Vol 134, 2022, 108361, ISSN 0893-9659, ttps://doi.org/10.1016/j.aml.2022.108361.
	
	\bibitem[AB]{AB:2020}
	\newblock C. Amrouche and S. Boukassa. Existence and regularity of solution for a model in magnetohydrodynamics, \textit{Nonlinear Analysis}, 190, 2020.
	
	\bibitem[Buk]{B.1} A. L. Bukhgeim, Carleman estimates for Volterra operators and uniqueness of inverse problems in non-classical problem of mathematical physics, Computer Center, Siberian Branch of USSR Academy of Sciences, Novosibirsk, pp 56-54 (in Russian).
	
	\bibitem[BK]{BK.1} A. L. Bukhgeim, M. V. Klibanov, Uniqueness in the large of a class of multidimensional inverse problems, Sov. Math. Dokl. 17, 244–247 (1981).
	
	\bibitem[BM]{BM:1992} G. Basile, G. Marro, \textit{Controlled and Conditioned Invariants in Linear Systems Theory}, Prentice Hall, 1992, pp 465.
	
	\bibitem[Car]{Car} T. Carleman, Sur un probleme d'unicite pour les syst\'{e}mes d'\'{e}quations aux d\'{e}riv\'{e}es partielles \& deux variables independentes Ark. Mat 26 (1939) 1-9.
	
	
		
	
	
	
	
	
	
	
	
	
	
	
	
	
	
	
	
	
	
	
	
	
	
	
	
	
	
	
	
	
	
	
	
	
	
	
	
	
	
	
	
	
	
	
	
	\bibitem[Fur.1]{F.1}, A. Fursikov, Real processes corresponding to the 3D Navier-Stokes system, and its feedback stabilization from the boundary, \textit{Partial Differential Equations, Amer. Math. Soc. Transl}, Ser. 2, Vol.\,260, AMS, Providence, RI, 2002.		
	
	\bibitem[Fur.2]{F.2} A. Fursikov, Stabilizability of two dimensional Navier--Stokes equations with help of a boundary feedback control, \textit{J. Math. Fluid Mech.} 3 (2001), 259--301.
	
	
	
	
	
	
	
	
	
	
	
	
	
	
	
	
	
	
	
	
	
	
		\bibitem[Kes]{SK:1989}
		\newblock S. Kesavan, Topics in Functional Analysis and Applications, New Age International Publisher, 1989.
	
	
	
	
	
	
	
	\bibitem[Kli]{K.1} M. V. Klibanov, Uniqueness of solutions in the large of some multidimensional inverse problems, \textit{Non-classical problems of mathematical physics}, Computer science, Siberian Branch of USSR Academy of Sciences, Novosibirsk, 1981, pp 101-114 (in Russian).
	
	\bibitem[LPT.1]{LPT.1} I. Lasiecka, B. Priyasad, R. Triggiani, Uniform Stabilization of Navier–Stokes Equations in Critical $L^q$-Based Sobolev and Besov Spaces by Finite Dimensional Interior Localized Feedback Controls. \textit{Appl. Math Optim.} (2019). https://doi.org/10.1007/s00245-019-09607-9
	
	\bibitem[LPT.2]{LPT.2} I. Lasiecka, B. Priyasad, R. Triggiani, Uniform Stabilization of 3D Navier–Stokes Equations in Low Regularity Besov Spaces with Finite Dimensional, Tangential-Like Boundary, Localized Feedback Controllers, \textit{Arch. Rational Mech. Anal.} 241, 1575–1654 (2021). https://doi.org/10.1007/s00205-021-01677-w.
	
	\bibitem[LPT.3]{LPT.3} I. Lasiecka, B. Priyasad, R. Triggiani, Uniform stabilization of Boussinesq systems in critical $\bs{L}^q$-based Sobolev and Besov spaces by finite dimensional interior localized feedback controls, \textit{Discrete \& Continuous Dynamical Systems - B}, 25, 10, 4071, 4117, 2020-6-15, 1531-3492\_2020\_10\_4071.
	
	\bibitem[LPT.4]{LPT.4} I. Lasiecka, B. Priyasad, R. Triggiani, Finite-dimensional boundary uniform stabilization of the Boussinesq system in Besov spaces by critical use of Carleman estimate-based inverse theory, \textit{Journal of Inverse and Ill-posed Problems}, vol. 30, no. 1, 2022, pp. 35-79. https://doi.org/10.1515/jiip-2020-0132
	
	
	\bibitem[LPT.5]{LPT.5} I. Lasiecka, B. Priyasad, R. Triggiani, Maximal -regularity for an abstract evolution equation with applications to closed-loop boundary feedback control problems, \textit{Journal of Differential Equations}, 294, 60-87.

	\bibitem[LPT.6]{LPT.6} I. Lasiecka, R. Triggiani, B. Priyasad, Uniform Stabilization in Besov spaces with arbitrary decay rates of the Magnetohydrodynamic System by finite-dimensional interior localized static feedback controllers, Special Issue on Control Theory in \textit{Research in Mathematical Sciences}, 61 pages, to appear in 2025.
	
	\bibitem[LPT.7]{LPT.7} I. Lasiecka, R. Triggiani, B. Priyasad, Maximal $L^p$-regularity of an abstract evolution equation: application to closed-loop feedback problems with boundary controls and boundary sensors, \textit{NONRWA}, 22 pages, special issue in memory of Lakshmikantham, to appear in 2025.
	
	\bibitem[LTZ.1]{L-T-Z.1} I. Lasiecka, R. Triggiani, Z. Zhang, Nonconservative wave equations with unobserved Neumann B.C.: Global uniqueness and observability,{\em AMS Contemp.~Math.}, Vol.~268 (2000),  227--326.
	
	\bibitem[LTZ.2]{L-T-Z.2} I. Lasiecka, R. Triggiani, Z. Zhang, Global uniqueness, observability and stabilization of non-conservative Schr\"odinger equations via pointwise Carleman estimates. Part I: $H^1(\Omega)$-estimates, {\em J.~Inverse \& Ill-Posed Problems}, Vol.~12(1) (2004), 1--81.
	
	\bibitem[Lef.1]{Lef:2011}
	\newblock C. G. Lefter, Feedback Stabilization of Magnetohydrodynamic Equations, \textit{SIAM Journal on Control and Optimization archive},	Volume 49 Issue 3, May 2011, Pages 963-983. 		
	
	\bibitem[Lef.2]{Lef:2010}
	\newblock C. G. Lefter, 
	\newblock On A Unique Continuation Property Related to the Boundary Stabilization of 
	Magnetohydrodynamics Equations,
	\newblock Anelle Stiintifice Ale Universitatii ``Al. I. Cuza" Din Iasi (S.N.) Matematica, Tomul LVI, 2010, f.1, DOI: 10.2478/v10157-010-0001-0.
	

	
	
	
	
	
	

	
	
	
	
	
	
	
	
	
	
	
	
	
	
	
	
	
	
	
	
	
	
	
	
	
	
	
	
	
	
	
	
	
	
	
	
	\bibitem[Tr.1]{RT:1975} R. Triggiani, On the Stabilizability Problem of Banach Spaces, {\textit{J. Math. Anal. Appl. 52 303-403}, 1975.}
	
	
	
	\bibitem[Tr.2]{RT:2008} R. Triggiani, Linear independence of boundary traces of eigenfunctions of elliptic and Stokes Operators and applications, invited paper for special issue, {\textit{Applicationes Mathematicae} 35(4) (2008), 481--512, Institute of Mathematics, Polish Academy of Sciences.}
	
	\bibitem[Tr.3]{RT1:2009} R. Triggiani, Unique continuation of boundary over-determined Stokes and Oseen eigenproblems, \textit{Discrete \& Continuous Dynamical Systems - S}, Vol. 2 , N. 3, Sept 2009, 645-677.
	
	\bibitem[Tr.4]{RT2:2009} R. Triggiani, Unique Continuation from an Arbitrary Interior Subdomain of the Variable-Coefficient  Oseen Equation,  {\textit{Nonlinear Analysis Theory, Meth. \& Appl.}, (17)2009, 4967-4976.}
	
	\bibitem[Tr.5]{RT:2024} R. Triggiani, Unique Continuation Properties of Static Over-determined Eigenproblems: The Ignition Key for Uniform Stabilization of Dynamic Fluids by Feedback Controllers, Chapter 2. In: Bodnár, T., Galdi, G.P., Nečasová, Š. (eds) \textit{Fluids Under Control. Advances in Mathematical Fluid Mechanics}. Birkhäuser, Cham. https://doi.org/10.1007/978-3-031-47355-5\_2, (2024)
	
	\bibitem[TW]{TW:2021} R. Triggiani, X. Wan, Unique Continuation Properties of Over-Determined Static Boussinesq Problems with Application to Uniform Stabilization of Dynamic Boussinesq Systems.\textit{ Appl. Math. Optim.} 84, 2 (Oct 2021), 2099–2146. https://doi.org/10.1007/s00245-020-09705-z.
	
	
	
	
	
	
		
	\bibitem[ST]{ST:1983}
	\newblock M. Sermange and R. Temam,
	\newblock Some Mathematical Questions Related to the MHD Equations,
	\newblock {\emph{Comm. Pure Appl. Math.} 36, 635–664 (1983).}	
	
	\bibitem[Sim]{DS:1964} L. De Simon, Un'applicazione della teoria degli integrali singolari allo studio delle equazioni differenziali lineari astratte del primo ordine, \textit{Rendiconti del Seminario Matematico della Universit\'{a} di Padova} (1964), Volume: 34, page 205-223.
	
	\bibitem[Sol]{Sol:1960}
	\newblock V. Solonnikov, 
	\newblock On some stationary boundary value problems in magnetohydrodynamics,
	\newblock {\emph{Trudy Mat. Inst. Steklov.} 59 174–87 (in Russian), 1960.}
	
	\bibitem[Yam]{Yam:2004} N. Yamaguchi, $L^{q_{-}}L^{r}$ estimates of solution to the parabolic Maxwell equations and their application to the magnetohydrodynamic	equations,\emph{Mathematical Analysis in Fluid and Gas Dynamics}, (2004), 1353: 72-91.
	
	\bibitem[YG]{YG:1998}
	\newblock Z. Yoshida, Y. Giga,
	\newblock On the Ohm-Navier-Stokes system in magnetohydrodynamics
	\newblock \emph{Journal of Mathematical Physics 24}, 2860 (1998); https://doi.org/10.1063/1.525667.
	
	\bibitem[Za]{Za:1992}
	\newblock J. Zabczyk, Mathematical Control Theory: An Introduction, Systems \& Control: Foundations \& Applications, Birkh\"auser Boston, Inc., Boston, MA, 1992.
		
	\bibitem[ZL]{ZL:2004}
	\newblock C. Zhao, K. Li,
	\newblock On existence, uniqueness and $L^r$-exponential stability for stationary solutions to the MHD equations in three-dimensional domains,
	\newblock \textit{The ANZIAM Journal}, 46(1), 95-109, . (2004). doi:10.1017/S1446181100013705
		
	\end{thebibliography}
\end{document}